\documentclass[12pt]{elsarticle}
\usepackage{graphicx}
\usepackage{amssymb}  
\usepackage{epstopdf}
\usepackage{comment}
\usepackage{xcolor}
\usepackage{hyperref}
\usepackage[final]{pdfpages}
\usepackage{frcursive}
\usepackage{bbm}
\usepackage{mathtools}
\usepackage{tabularx}
\usepackage{stmaryrd}
\usepackage{algorithm}
\usepackage{algorithmic}
\usepackage[normalem]{ulem}
\usepackage{dutchcal}
\usepackage{upgreek}
\usepackage{caption}

\usepackage{yfonts}
\usepackage{lscape}
\usepackage{epsfig}
\usepackage{fullpage}
\usepackage[titletoc]{appendix}
\usepackage{fancyhdr}
\usepackage{amsmath,amssymb,xspace}
\usepackage{tabularx}
\usepackage{mathrsfs}

\usepackage{calligra}
\usepackage[T1]{fontenc}

\def\cal{\mathcal}

\newcommand{\bs}[1]{\boldsymbol{#1}}

\newcommand{\R}{\mathbb{R}}

\newcommand{\N}{\mathbb{N}}

\newcommand{\bl}{\begin{law}}
\newcommand{\el}{\end{law}}
\newcommand{\bthm}{\begin{thm}}
\newcommand{\ethm}{\end{thm}}

\newcommand{\jump}[1]{\llbracket #1 \rrbracket}

\newcommand{\mc}[1]{\mathcal{#1} }

\usepackage{chngcntr}
\usepackage{amsthm}

\newfont{\bbb}{msbm10 scaled\magstep1}
\newtheorem{theo}{Theorem}[section]

\newtheorem{rem}{Remark}[section]

\let \leq \leqslant
\let \geq \geqslant
\let \cal \mathcal

{ \par \medskip \par
  \noindent \textit{\textbf{Demonstration\/}} : }{\null \hfill $\Box$ \par }

\makeatletter
\newcommand{\doublewidetilde}[1]{{%
  \mathpalette\double@widetilde{#1}%
}}
\newcommand{\double@widetilde}[2]{%
  \sbox\z@{$\m@th#1\widetilde{#2}$}%
  \ht\z@=.9\ht\z@
  \widetilde{\box\z@}%
}

\allowdisplaybreaks 

\begin{document}

\begin{frontmatter}

\title{A diffusion-free multi-layer neural-network method for multidimensional nonlinear hyperbolic equations}
\author[carl,crm]{Emmanuel LORIN}
\ead{elorin@math.carleton.ca}

\address[carl]{School of Mathematics and Statistics, Carleton University, Ottawa, Canada, K1S 5B6}
\address[crm]{Centre de Recherches Math\'{e}matiques, Universit\'{e} de Montr\'{e}al, Montr\'{e}al, Canada, H3T~1J4}

\author[ott]{Arian NOVRUZI}
\ead{novruzi@uottawa.ca}
\address[ott]{Department of Mathematics and Statistics, University of Ottawa, Ottawa, Canada}

\begin{abstract}
  This paper is devoted to the non-diffusive computation of solutions to $N$-dimensional hyperbolic systems of conservation laws (HCL) and more specifically the accurate approximation of simple waves. We develop the {\it LeafNet} algorithm, which relies on:
i) a reformulation of HCL as a coupled system involving smooth solutions, and
ii) an accurate physics informed algorithms approximating smooth functions. 
An error estimate analysis and two-dimensional numerical experiments are proposed to illustrate the presented strategy.

\end{abstract}

\begin{keyword} 
Hyperbolic equations; conservation laws; weak solutions; optimization; neural network;  machine learning
\end{keyword}

\end{frontmatter}


\section{Introduction}\label{s:intro}
This paper is devoted to the neural network-based solution of multidimensional nonlinear hyperbolic conservation laws (HCL), see \cite{serre,lefloch,smoller}. The use of neural networks for solving (parameterized) PDE has recently become very popular, as beyond the use of automatic differentiation, they allow for the inclusion of external (experimental or numerical) data in the solver.
Among the existing techniques the most popular are the Physics Informed Neural Network (PINN) methods \cite{pinns,pinns2,pinns3}, which can be seen as a generalization of the pioneering work in \cite{lagaris}. 
Let us also cite alternative methods presented in \cite{otherNN,otherNN2,otherNN3}.

In this paper, we are particularly interested in the approximation of simple waves, more precisely shock and rarefaction waves,  which is known to be a difficult numerical problem with standard computational methods, such as finite volume methods \cite{alouges2,roe,god2,god1}. The latter rely on approximate Riemann solvers, and usually produce some artificial diffusion. Increasing the order of approximation allows for a reduction of the artificial diffusion, but also leads to stability issues, requiring the use of slope limiters \cite{MYV}. 
On the other hand, in order to approximate weak non-smooth solutions  alternative methods based on variational formulation of PDEs, such as Deep Ritz \cite{ritz} or Variational-PINN \cite{VPINN}, have been developed. 

PINN-like methods allow for the approximation of smooth solutions to HCL (including rarefaction waves), avoiding to a certain extent stability issues usually due to time-stepping. Indeed, within PINN algorithms, IVP problems are often reformulated and approximated as a BVP, where the initial condition is treated as a boundary condition. Stability issues may however be moved to the optimization algorithm. In order to avoid a null solution, the integration time has to be taken not too large, which represents a drawback of these methods. 

As for the shock waves, their approximation with classical neural network algorithms, 
such as PINN in \cite{pinns,pinns2,pinns3}, is known to be very inaccurate. This is mainly due to the fact that shock waves are non-smooth solutions, defined by the weak formulation of the hyperbolic conservation laws, and that similarly to finite difference methods,  neural network algorithms, such as PINN methods, approximate PDEs in their strong form. 
As a consequence, a direct use of (smooth) neural networks only generates smooth solutions (hence corresponding to regularized shocks).

In this paper, we develop an algorithm allowing to accurately approximate shock waves using neural networks by:
i) approximating smooth solutions, 
ii) approximating the Rankine-Hugoniot jump condition, and 
iii) reconstructing the overall piecewise smooth solution. 
The proposed method can be seen as a multidimensional extension of the algorithm which was developed in \cite{jcp2024} for one-dimensional systems, and which largely relies on the structure of solutions as well as mathematical properties of HCL. 
We will focus on the approximation of shock waves for $N$-dimensional hyperbolic conservation laws, including
their generation from smooth Cauchy data.  \textcolor{black}{ ``Diffusion-free'' refers to the fact that the numerical solution computed by the LeafNet method
captures discontinuous solutions without introducing artificial or numerical diffusion that would otherwise smooth out the waves. Namely, we compute the shock waves as the discontinuity curves/surfaces, which separate regions where the solution is smooth and HCL are satisfied in a classical sense, 
and where the Rankine-Hugoniot condition is hence accurately satisfied.} 
We also will design self-similar neural networks allowing for an accurate approximation of rarefaction waves. Finally we will extend our algorithm to the systems of HCL. The extension of the proposed strategy to more complex problems, such as wave-interactions, overturning or closed surfaces, will covered in a forthcoming paper.

The paper is organized as follows. 
In Section \ref{s:1xN,HCL}, we derive and analyze our method for $N$-dimensional scalar conservation laws.
Namely, in Subsection \ref{ss:1xN,shock,method}, we discuss the general strategy for $N$-dimensional scalar hyperbolic equations using the mathematical structure of the weak solutions.
More precisely, in Subsection \ref{sss:1xN,shock,alg} we present the associated algorithm.
In Subsection  \ref{sss:1xN,shock,theo} we propose a mathematical justification of our
algorithm, followed by a detailed mathematical analysis of the error for shock wave solutions
in Subsection \ref{sss:1xN,shock,error}.
In Subsection \ref{ss:1xN,shock,form} we discuss the formation of shock waves and in Subsection \ref{ss:1xN,rw} an algorithm for generating rarefaction waves is derived.
The extension our method to $N$-dimensional systems of HCL is presented in Section \ref{s:mxN,HCL}. Section \ref{s:numerics} is devoted to numerical experiments, and concluding remarks are proposed in Section \ref{s:conclusion}.

We finish this introduction by recalling some basic concepts on neural networks in $N$-dimensions.
Let $l\in\mathbb N$, $n_i\in\mathbb N$, $i=0, 1,\ldots, l$, with $n_0=N$,
and $\sigma:\mathbb R\mapsto\mathbb R$ a given nonlinear function, called activation function.	
For $\theta \in  \prod_{i=1}^l \mathbb R^{n_i\times (n_{i-1}+1)}$, we write
	\begin{eqnarray*}
		\theta&=&(\theta^i),\quad \theta^i=(w^i,b^i),\quad i=1,\ldots,l,\;\;\mbox{\it where}\\
		&&w^i(\cdot,\cdot)\in \mathbb R^{n_i\times n_{i-1}},\\
		&&b^i(\cdot)\in \mathbb R^{n_i}.
	\end{eqnarray*}
	A (fully connected) network in $\R^N$ with architecture $A=[n_0,n_1,\ldots,n_l]$ is a function of the form 
\begin{subequations}
	\begin{align}
	{\bf u}&:(\theta;x)\in \prod_{i=1}^l \mathbb R^{n_i\times (n_{i-1}+1)}\times \mathbb R^N 
	\mapsto
	{\bf u}(\theta;x)\in \mathbb R^{n_l},
	\\
	{\bf u}(\theta;x)
	&=
	w^l\cdot\sigma(w^{l-1}\cdot(\cdots \sigma(w^2\cdot\sigma(w^1\cdot x + b^1)+b^2)\cdots)+b^{l-1})+b^l,
	\end{align}
	\end{subequations}
where $\sigma$ applies component-wise.
Given an architecture $A$, the set of all parameters $\theta$ associated to  $A$ is denoted by 
$\Theta_A$. The set of all network functions associated to parameters in $\Theta_A$ is denoted by 
${\bf N}_A$. All along this paper, the network functions will be denoted by boldface letters.

\section{Scalar conservation laws in $N$ dimensions}\label{s:1xN,HCL}
In this section we consider an $N\geq 1$ dimensional nonlinear scalar hyperbolic conservation law
in $Q:=\Omega\times [0,T]$ for some (un)bounded open smooth set $\Omega \subseteq \R^N$. 
Denoting $\nabla=(\partial_1,\ldots,\partial_N)=(\partial_{x_1},\ldots,\partial_{x_N})$, the 
problem we consider is
\begin{subequations}\label{e:CL(N,1)}
\begin{align}
    \partial_t u + f(u)\cdot\nabla u
    & = 0,\hspace*{10mm} \textrm{ in } Q, \\
    u(\cdot,0) & =  u_0(\cdot),\hspace*{5mm} \textrm{ on } \Omega,
\end{align}
\end{subequations}
where
$u: (x,t):=(x_1,\ldots,x_N,t)\in Q \rightarrow u(x,t) \in \R$,
$f(u)=(f_1(u),\ldots,f_N(u))=:F'(u)$, $F\in C^2(\R;\R^N)$.

We impose incoming flux boundary conditions - that is boundary conditions are imposed where characteristic lines are incoming. For simplicity of presentation, we will not discuss these boundary conditions and we refer to \cite{jmg} for details. 
For simplicity of presentation, here we will consider $\Omega=\prod_{i=1}^N(a_i,b_i)$, or $\Omega=\R^N$.
The latter  case keeps the full generality of the problem, if we extend smoothly $u_0$ to a function with compact support, and reducing the time $T$.

\subsection{Shock waves and the principle of the LeafNet method}\label{ss:1xN,shock,method}
Let $\Omega=\prod_{i=1}^N(a_i,b_i)$.
Given two functions $u_0^\pm\in C^1(\Omega)$, and a $C^1$-surface $\Gamma_0\subset\Omega$, parameterized by ${g}_0\in C^1(\Omega')$, $\Omega':=(a_1,b_1)\times\cdots\times(a_{N-1},b_{N-1})$, 
\begin{eqnarray*}
  \Gamma_0=
  \left\{
  (x',x_N) \in \Omega,\;\, 
   x_N={g}_0(x'),
  \quad x'=(x_1,\ldots,x_{N-1})\in\Omega'\right\},
\end{eqnarray*}
where the direction $x_N$ is chosen arbitrarily, without loss of generality. We define $\Omega^\pm\subset \Omega$ by
\begin{equation}\label{e:Omegapm}
\Omega^+=\{(x',x_N)\in\Omega,\; x_N>g_0(x')
\},\quad
\Omega^-=\{(x',x_N)\in\Omega,\; x_N<g_0(x')
\},
\end{equation}
and the $C^1(\Omega)$-piecewise function $u_0$ by
\begin{equation}\label{e:u0=u0+,u0-}
u_0(x)=u_0(x',x_N)=
\left\{
\begin{array}{ll}
u_0^+(x),& x_N>g_0(x'),\\
u_0^-(x),& x_N<g_0(x').
\end{array}
\right.
\end{equation}
The shock of \eqref{e:CL(N,1)} with $u_0$ given by \eqref{e:u0=u0+,u0-} is identified 
as the surface $\Gamma$ in $Q$, where the Rankine-Hugoniot condition is satisfied.  
We assume that the evolution of the shock wave of  
\eqref{e:CL(N,1)} is parameterized  by $x_N={g}(x',t)$, i.e.
\begin{eqnarray}\label{e:sigma(N)}
\Gamma& = & \big\{(x',{g}(x',t),t) \in Q\,\, : \,\, 
(x',t) \in Q':=\Omega'\times(0,T)\big\}.
\end{eqnarray}
Our objective is in particular to identify the function ${g}$.
Let $\nu=(\nu_1,\ldots,\nu_N,\nu_t)$ be the unit normal vector on $\Gamma$ oriented on the direction of 
$x_N$ axis given by
\begin{equation}
\nu
=
\frac{\nabla_{(x,t)}(x_N-{g}(x',t)}{|\nabla_{(x,t)}(x_N-{g}(x',t))|}
= 
\frac{(-\partial_1{g},\ldots,-\partial_{N-1}{g},1,-\partial_t{g})}
     {\sqrt{1+|\partial_1{g}|^2+\cdots+|\partial_{N-1}{g}|^2 + |\partial_{t}{g}|^2}},          \label{e:nu(N)}
\end{equation}
The Rankine-Hugoniot jump condition on $\Gamma$ in terms of $u^\pm$ reads
\begin{eqnarray}
\nu_t\jump{u} + \jump{F(u)}\cdot(\nu_1,\ldots,\nu_N)
&=& 
0,	\label{e:RH(N,1)-1}
\end{eqnarray}
where in general, for any function $v$, the jump operator $\jump{\cdot}$ on $\Gamma$ is defined by
\begin{equation}\label{e:jump(v)}
\jump{v}=\lim_{h\to0}(v(x',g(x')+h^2)-v(x',g(x')-h^2)).
\end{equation} 
Then by rearranging \eqref{e:RH(N,1)-1}, we deduce that ${g}$ satisfies 
\begin{subequations}\label{e:RH(N,1)}
\begin{align}
\partial_t{g} 
+ 
\frac{\jump{F_1(u)}}{\jump{u}}\partial_1{g}
+\cdots+
\frac{\jump{F_{N-1}(u)}}{\jump{u}}\partial_{N-1}{g}
&=:					\nonumber\\
\partial_t{g} 
+
a_1(g,x',t)\partial_1g
+\cdots+
a_{N-1}(g,x',t)\partial_{N-1}g
&=
b(g,x',t)				  \label{e:RH(N,1),i}\\
&:=
\frac{\jump{F_N(u)}}{\jump{u}},		\nonumber\\
{g}(\cdot,0)
  &=
  {g}_0(\cdot) \, .	 	 \label{e:RH(N,1),b}
\end{align}
\end{subequations}
Hereafter we rewrite the jumps $\jump{\cdot}$ in \eqref{e:RH(N,1)} in a form that will be used 
for the analysis and the error estimate in the following sections.
First, we consider the following problems (with appropriate incoming flux boundary conditions, or no boundary conditions if $u_0^\pm$ are extended smoothly to functions with compact support),
\begin{subequations}\label{e:upm(1,N)}
\begin{align}
    \partial_t u^\pm + f(u^\pm)\cdot \nabla u^{\pm} & =  0, \,\,\,\;\; \textrm{ in } Q, \label{e:upm(1,N),i}
    \\
    u^\pm(\cdot,0) & =  u_0^\pm, \,\,\, \textrm{ on } \Omega.	\label{e:upm(1,N),b}
\end{align}
\end{subequations}
Subject to taking $T$ smaller, we assume that  there is no shock formation for both IBVPs \eqref{e:upm(1,N)}
for $t\leq T$. 
Following \cite[Theorem 3.1]{majda1984},  this holds if both $\nabla\cdot f(u_0^\pm)$ take negative values and $\nabla\cdot f(u_0^\pm)$ are bounded. In such a case, $T$ can be defined as
\begin{equation}\label{e:noshock(N,1)} T=-\frac{1}{\min\{\nabla\cdot f(u_0^\pm(\alpha)),\; \alpha\in\mathbb R^N\}},\quad
\mbox{where}\;\; 
\nabla\cdot f(u_0^\pm)=
\sum_{i=1}^N f'_i(u_0^\pm)\partial_iu_0^\pm,
\end{equation}
From the uniqueness of classical solutions  to \eqref{e:CL(N,1)} we have $u=u^+$ above $\Gamma$ and $u=u^-$ below $\Gamma$ (in the $x_N$ direction).
Then for $i=1,\ldots,N$ we have
\begin{eqnarray*}
\jump{F_i(u)}
&=&
F_i(u^+(x',{g}(x',t),t))-F_i(u^-(x',{g}(x',t),t))
\\
&=&
(u^+(x',{g}(x',t),t)-u^-(x',{g}(x',t),t))\\
&&
\cdot
\int_0^1 f_i(u^-(x',{g}(x',t),t) + 
             s(u^+(x',{g}(x',t),t) - u^-(x',{g}(x',t),t)))ds
\\
&=&
\jump{u}
\int_0^1 f_i(u^- + s\jump{u})ds
\\
&=:&
\jump{u}
\left\{\begin{array}{ll}a_i(x',t),&i=1,\ldots,N-1,\vspace*{2mm}\\b(x',t),&i=N.\end{array}\right.
\end{eqnarray*}
This allows to rewrite the coefficients $a_i$ and $b$ in \eqref{e:RH(N,1)} as follows
\begin{subequations}\label{e:ai,b}
\begin{align}
&a=(a_1,\ldots,a_{N-1}),				\nonumber\\
&a_i(g,x',t)=
\int_0^1 f_i(u^-(x',{g},t) + s(u^+(x',{g},t) - u^-(x',{g},t)))ds,
\\
&b(g,x',t)=\int_0^1 f_N(u^-(x',{g},t) + s(u^+(x',{g},t) - u^-(x',{g},t)))ds.
\end{align}
\end{subequations}
We will use \eqref{e:RH(N,1)} with the form \eqref{e:ai,b} of coefficients $a$, $b$ for estimating the error of the method.
It will be helpful in the following to write \eqref{e:RH(N,1)} in operator form as follows
\begin{equation}\label{e:K[g]=0}
K[g]=(K_i[g],K_b[g])
:=
(\partial_t{g} + a\cdot \nabla_{x'} g - b, g(\cdot,0)- g_0(\cdot))=(0,0),
\end{equation}
where $a$ and  $b$ are given by \eqref{e:ai,b}.

Hence, ``solving'' the shock $\Gamma$ of \eqref{e:CL(N,1)} is equivalent to solve the $N-1$ dimensional 
IVP \eqref{e:RH(N,1)}. The method, that we will refer as {\it LeafNet method}, consists in:
%
\begin{equation}\label{m:shock-wave,Nx1}
\left.
\begin{array}{lp{130mm}}
1)& 
\mbox{\it Solving (independently) the two IBVP \eqref{e:upm(1,N)},
 which provide $u^-$ and $u^+$ in $Q$}.\\
2)&
\mbox{\it Solving the Rankine-Hugoniot \eqref{e:RH(N,1)}, which provides ${g}$ in $Q'$}.
\\
3)& 
\mbox{\it Reconstructing the solution from the initial IBVP \eqref{e:upm(1,N)} for $(x',x_N,t)\in Q$,}
\\
& \mbox{\it as follows}
    \begin{eqnarray*}
      u(x',x_N,t)=
      \left\{
      \begin{array}{ll}
        u^-(x',x_N,t), & \textrm{ if } x_N<{g}(x',t)\, ,\\
         u^+(x',x_N,t), & \textrm{ if } x_N>{g}(x',t)\, .       
        \end{array}
      \right.
      \end{eqnarray*}
\end{array}
\right.
\end{equation}
Such a construction of the shock waves is presented in \cite{majda1984} for $N=1,2$. 
Here we extend it in dimension $N$. Furthermore, we will provide precise $C^1$ estimates for
the solutions $u^\pm$, which will be useful for the error estimate of the network approximation 
of the shock wave.

\subsubsection{Neural network algorithm: the LeafNet algorithm}\label{sss:1xN,shock,alg}
The leaflnet method, presented in Subsection \ref{ss:1xN,shock,method}, see \eqref{m:shock-wave,Nx1}, benefits from the accurate approximation of classical solutions by PINN algorithms, hence avoiding spurious regularization of discontinuous waves. 
We implement it by using a combination of PINN algorithms. This algorithm will be referred as the {\it LeafNet} algorithm. 

We introduce two neural networks 
${\bf u}^\pm(x,t)$, $x=(x_1,\ldots,x_N)$ for solving \eqref{e:upm(1,N)}.
In addition, we introduce one neural network ${\bf g}(x',t)$ for solving \eqref{e:RH(N,1)}. 
More precisely we proceed as follows.

\begin{enumerate}
\item 
{\it Approximation of \eqref{e:upm(1,N)}}. We introduce the operator 
\begin{equation}\label{e:Lpm(N,1)}
L^\pm[u]=(L^\pm_i[u],L^\pm_b[u])
:=
(\partial_t u+ f(u)\cdot\nabla u,u(\cdot,0)-u_0^\pm).
\end{equation}
Given an architecture $A$ in $Q$, with parameters $\Theta_A$,
for $k,m\in\N_0$ and ${\bf u}\in{\bf N}_A$ define
\begin{subequations}
\begin{align}
\bs{\cal L}^\pm[{\bf u}]
&=      
\sum_{\substack{\alpha\in\N_0^N,\beta\in\N_0\\ |\alpha|+\beta\leq k}}
\int_0^T
\int_\Omega
|D^\alpha_xD^\beta_tL^\pm_i[{\bf u}]|^2 dxdt
+
\sum_{\substack{\alpha\in\N_0^N,|\alpha|\leq m}}
\int_\Omega
|D^\alpha_xL^\pm_b[{\bf u}]|^2 dx.			\label{e:cLpm(N,1)}
\end{align}
\end{subequations}
We refer to \cite{de2021approximation,de2024error} for discussions on the use of Sobolev norms to define loss functions.
Next we look for ${\bf u}^\pm\in{\bf N}_A$ the solution to
\begin{equation}
{\bf u}^\pm
=\textrm{argmin}\{\bs{\cal L}^\pm[{\bf u}],\;\; {\bf u}\in{\bf N}_A\}.    \label{e:argmin(cLpm[u])}
\end{equation}
Note that in \eqref{e:argmin(cLpm[u])}, the optimization is made with respect to the weights 
$\theta\in\Theta_A$, and  ${\bf u}^\pm$ represents the network which corresponds to the optimal 
parameters $\theta^\pm\in\Theta_A$.
We will show that under specific conditions, ${\bf u}^\pm$ approximates $u^\pm$.

\item
{\it Solution to the IVP \eqref{e:RH(N,1)}}.
We consider an architecture $A'$ in $\Omega'$, 
and denote by $\Theta_{A'}$, resp. ${\bf N}_{A'}$, its associated set of parameters and network functions.
For ${\bf h}\in {\bf N}_{A'}$ we define
\begin{equation}\label{e:bfK[h]}
{\bf K}[{\bf h}]
=
({\bf K}_i[{\bf h}],{\bf K}_b[{\bf h}])
=
(\partial_t{\bf h} + {\bf a}\cdot \nabla_{x'}{\bf h} - {\bf b}, {\bf h}(\cdot,0)- g_0(\cdot)),
\end{equation}
where
\begin{subequations}\label{e:bfai,bfb}
\begin{align}
&{\bf a}=({\bf a}_1,\ldots,{\bf a}_{N-1}),				\nonumber\\
&{\bf a}_i({\bf h},x',t)
=
\int_0^1 f_i({\bf u}^-(x',{\bf h},t) + s({\bf u}^+(x',{\bf h},t) - {\bf u}^-(x',{\bf h},t)))ds,
\\
&{\bf b}({\bf h},x',t)
=
\int_0^1 f_N({\bf u}^-(x',{\bf h},t) + s({\bf u}^+(x',{\bf h},t) - {\bf u}^-(x',{\bf h},t)))ds,
\end{align}
\end{subequations}
and $u^\pm$ are defined in \eqref{e:upm(1,N)}.
For $p,q\in\N_0$ and ${\bf h}\in{\bf N}_{A'}$ define
\begin{subequations}\label{e:cK[h]}
\begin{align}
\bs{\cal K}[{\bf h}]
&=      
\sum_{\substack{\alpha\in\N_0^N,\beta\in\N_0\\ |\alpha|+\beta\leq p}}
\int_0^T
\int_{\Omega'}
|D^\alpha_xD^\beta_t {\bf K}_i[{\bf h}]|^2 dxdt
+
\sum_{\substack{\alpha\in\N_0^N,|\alpha|\leq q}}
\int_{\Omega'}
|D^\alpha_x {\bf K}_b[{\bf h}]|^2 dx \, ,		
\end{align}
\end{subequations}
and define ${\bf g}\in{\bf N}_{A'}$  by
\begin{equation}
    	{\bf g} = \textrm{argmin}
    	\{\bs{\cal K}[{\bf h}],\;\; {\bf h}\in {\bf N}_{A'}\},
\label{e:argmin(cK[h])}
\end{equation}
\item 
Reconstruction of the approximate solution ${\bf u}$ to \eqref{e:CL(N,1)}, for $(x',x_N,t)\in Q$
    \begin{eqnarray}\label{algo:rec}
      {\bf u}(x',x_N,t)=
      \left\{
      \begin{array}{ll}
        {\bf u}^-(x',x_N,t), & \textrm{ if } x_N<{\bf g}(x',t)\, ,\\
         {\bf u}^+(x',x_N,t), & \textrm{ if } x_N>{\bf g}(x',t)\, .       
        \end{array}
      \right.
    \end{eqnarray}
\end{enumerate}  
In this paper, we have not imposed any hard constraints forcing the solver to only approximate entropic waves only. Entropy conditions, such as (23), could easily be included within the loss but would deteriorate the training of the neural networks. Let us add, that entropy questions was shortly addressed in \cite{jcp2024}.
\subsubsection{Theoretical justification of Algorithm \ref{sss:1xN,shock,alg}}\label{sss:1xN,shock,theo}
In this section we will justify rigorously the LeafNet method \eqref{m:shock-wave,Nx1}.
The existence part of the following result has been presented in \cite{majda1984} in dimension two.
We extend it in dimension $N$ and provide some 
$L^\infty$-estimates which will be important for estimating the error on approximate shock waves.

To avoid the technicalities related to the boundary conditions, all along this section we
assume that $u^\pm$ are with compact support
and we take $\Omega=\R^N$, $\Omega'=\R^{N-1}$. 
\begin{theo}\label{th:u,Sigma(N,1)}
Let $F=(F_1,\ldots,F_N)\in C^2(\R,\R^N)$, $f=(f_1,\ldots,f_N):=F'$, with $f_N$ increasing,
${g}_0\in C^1_b(\R^{N-1})$ and 
$\Gamma_0=\{(x',{g}_0(x')),\; x'\in\R^{N-1}\}$ the graph of ${g}_0$ with normal vector 
$\nu^0=(\nu^0_1,\ldots,\nu^0_N)$ oriented toward $x_N$ axis.
Let also $u_0^+,u_0^-\in C^1(\R^N)$ with compact support satisfying the following entropy condition (see \cite{majda1984})
\begin{equation}\label{e:shockhere}
f_N(u_0^-(x))
>
\frac{\jump{F_N(u_0(x))}}{\jump{u_0(x)}}
>
f_N(u_0^+(x)),\quad \forall x\in\Gamma_0,
\end{equation}
where $\jump{\varphi(u)}:=\varphi(u_0^+)-\varphi(u_0^-)$ on $\Gamma_0$ for any function $\varphi$,
see \eqref{e:jump(v)}, and define $u_0$ as in \eqref{e:u0=u0+,u0-}.
%
If $T>0$ is small enough then the following statements hold:
\\
(i)
Problems \eqref{e:upm(1,N)} have unique solutions $u^+, u^-\in C^1(\R^N\times[0,T])$. Furthermore,
\begin{eqnarray}
\hspace*{-10mm}
\|u^\pm\|_{L^\infty(\R^N\times[0,T])} 
&\leq &
\|u_0^\pm\|_{L^\infty(\R^N)}:=\rho,
\label{e:|upm|Linf}
\\
\hspace*{-10mm}
\|\partial_tu^\pm\|_{L^\infty(\R^N\times[0,T])} 
+
\|\nabla u^\pm\|_{L^\infty(\R^N\times[0,T])} 
&\leq &
\|\nabla u_0\|_{L^\infty(\R^N)} \nonumber\\ 
&+&
C(\|\partial_zf\|_{L^\infty(B_\rho\times\R^N\times\R)},\|\nabla_xf\|_{L^\infty(B_\rho\times\R^N\times\R)}).
\label{e:|Dtxupm|Linf}
\end{eqnarray}
(ii)
The problem \eqref{e:RH(N,1)} has a unique piecewise $C^1(\R^{N-1}\times[0,T])$ weak solution,
and \eqref{e:CL(N,1)} has a unique piecewise $C^1(\R^N\times[0,T])$ solution $u$, with a $C^1$-discontinuity surface  $\Gamma$ with equation $x_N={g}(x',t)$,  which can be constructed as in 3), 
\eqref{m:shock-wave,Nx1}.
\end{theo}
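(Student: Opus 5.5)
We will treat the two parts separately, relying throughout on the method of characteristics and on the fact that the support of $u_0^\pm$ is compact, which provides uniform bounds in $x$.

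\textbf{Part (i).} For each sign we write the characteristic system
\[
\dot X(t)=f(u^\pm(X(t),t)),\qquad X(0)=\alpha\in\R^N .
\]
Along this curve $u^\pm$ is constant, so $u^\pm(X(\alpha,t),t)=u_0^\pm(\alpha)$ and the characteristics are straight lines, $X(\alpha,t)=\alpha+t\,f(u_0^\pm(\alpha))$. The Jacobian is
\[
D_\alpha X=I+t\,f'(u_0^\pm)\otimes\nabla u_0^\pm ,
\]
a rank-one perturbation of the identity, so
\[
\det D_\alpha X = 1+t\,\nabla\!\cdot f(u_0^\pm)=1+t\sum_i f_i'(u_0^\pm)\,\partial_i u_0^\pm .
\]
Since $u_0^\pm$ has compact support, this quantity is bounded away from $0$ for $t\le T$ small, for instance $T<1/(2\|f'\|_{L^\infty(B_\rho)}\|\nabla u_0^\pm\|_\infty)$. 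For such $T$ the map $\alpha\mapsto X(\alpha,t)$ is a proper local $C^1$ diffeomorphism of $\R^N$, hence a global one (Hadamard). We then set $u^\pm(x,t)=u_0^\pm(X^{-1}(x,t))$, which is $C^1$. Uniqueness of $C^1$ solutions follows because any $C^1$ solution is constant along its own characteristics, so it coincides with the constructed one.

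The bound \eqref{e:|upm|Linf} is immediate from constancy along characteristics. For \eqref{e:|Dtxupm|Linf} we differentiate $u^\pm=u_0^\pm\circ X^{-1}$, which gives
\[
\nabla u^\pm=\frac{\nabla u_0^\pm}{1+t\,\nabla\cdot f(u_0^\pm)},
\]
by Sherman--Morrison. Then $\partial_t u^\pm=-f(u^\pm)\cdot\nabla u^\pm$. Bounding the denominator from below by $1/2$ and $f,f'$ on $B_\rho$ yields a constant depending only on the indicated norms of $f$ (with $\nabla_x f=0$ here).

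\textbf{Part (ii).} Once $u^\pm$ are known and $C^1$, equation \eqref{e:RH(N,1)} is a scalar first-order quasilinear equation in $(x',t)$, with coefficients $a(g,x',t)$, $b(g,x',t)$ given by \eqref{e:ai,b}. These coefficients are $C^1$ in $(g,x',t)$ since $f\in C^1$ and $u^\pm\in C^1$, and they are bounded with bounded derivatives by part (i).

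We solve \eqref{e:RH(N,1)} by characteristics:
\[
\dot y = a(G,y,t),\qquad \dot G=b(G,y,t),\qquad y(0)=\beta,\quad G(0)=g_0(\beta).
\]
This ODE system has global $C^1$ solutions on $[0,T]$ by Cauchy--Lipschitz with bounded Lipschitz data. As in part (i), $D_\beta y=I+O(t)$ uniformly, because $g_0\in C^1_b$. For $T$ small the map $\beta\mapsto y(\beta,t)$ is therefore a global diffeomorphism of $\R^{N-1}$, and $g(x',t)=G(y^{-1}(x',t),t)$ is the unique $C^1$ solution; any $C^1$ solution must follow these characteristics.

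We then define $u$ by reconstruction 3) of \eqref{m:shock-wave,Nx1}. On each side $u$ is a classical solution. Across $\Gamma$ the Rankine--Hugoniot condition \eqref{e:RH(N,1)-1} holds by construction, since \eqref{e:RH(N,1)} is equivalent to it once $\jump{u}\neq0$. A standard integration by parts on $Q^\pm$ shows that $u$ is a weak solution.

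\textbf{Main obstacle.} The delicate point is ensuring $\jump{u}\ne0$ along $\Gamma$ for $t\le T$, so that $a$ and $b$ are genuinely the divided differences. We handle it with the entropy condition \eqref{e:shockhere}. Together with $f_N$ increasing, it forces $u_0^-\neq u_0^+$ on $\Gamma_0$, more precisely $u_0^->u_0^+$. By continuity and the Lipschitz bounds of part (i), the strict inequalities persist on $\Gamma$ for small $T$.

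The same continuity argument shows that characteristics of $u^\pm$ impinge on $\Gamma$ from both sides. This is the Lax condition, and it gives uniqueness of the piecewise $C^1$ solution, following \cite{majda1984}. Any other piecewise $C^1$ solution with a graph discontinuity must coincide with $u^\pm$ on each side, because those regions are determined by the incoming characteristics. Its discontinuity surface must then solve \eqref{e:RH(N,1)}, which has the unique solution $g$.
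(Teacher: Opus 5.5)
Your part (i) takes a more direct route than the paper. The paper obtains $u^\pm$ and the bounds from its general Theorem \ref{th:QL1oPDE,sol}, which uses Gr\"onwall on the variational equations, $\det\ge 1-P(T)$, and a Neumann series for $\nabla_xX^{-1}$. You instead use that \eqref{e:upm(1,N)} is autonomous and source-free: the characteristics are straight, the Jacobian is exactly $1+t\,\nabla\cdot f(u_0^\pm)$, and $\nabla u^\pm=\nabla u_0^\pm/(1+t\,\nabla\cdot f(u_0^\pm))$ in closed form. Your route is sharper. It shows that classical solutions persist exactly while $1+t\,\nabla\cdot f(u_0^\pm)>0$, i.e. up to \eqref{e:noshock(N,1)}, and it gives $\|\nabla u^\pm\|_\infty\le 2\|\nabla u_0^\pm\|_\infty$. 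The paper's general theorem is still what one needs for the Rankine--Hugoniot equation, with its $(g,x',t)$-dependent coefficients and source $b$, and for the error equations of Theorems \ref{th:|bupm-upm|Linf}--\ref{th:|bg-g|Linf}. In (ii) you redo that general argument by hand, and this coincides with the paper's proof. Your constant, like the paper's, necessarily also depends on $\|\nabla u_0^\pm\|_\infty$ and $\|f\|_{L^\infty(B_\rho)}$, not only on $f'$ (take $f$ constant).

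There is, however, a genuine gap in identifying $\Gamma$ as the shock, and it is the same unproved assertion the paper makes: \eqref{e:shockhere} does not imply that characteristics impinge on $\Gamma$ from both sides. Using $\partial_t g=b-a\cdot\nabla_{x'}g$, a $u^-$-characteristic approaches $\Gamma$ at rate $f_N(u^-)-b-\sum_{i=1}^{N-1}\partial_i g\,(f_i(u^-)-a_i)$, and symmetrically on the $+$ side. Condition \eqref{e:shockhere} controls only $f_N(u^-)-b$. For a counterexample, take $N=2$, $F_1=cu^2/2$, $F_2=u^2/2$, and $u_0^-=1$, $u_0^+=0$ near a point of $\Gamma_0$ where $g_0'=k$ with $kc>1$. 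Then \eqref{e:shockhere} reads $1>1/2>0$, but both approach rates equal $(1-kc)/2<0$. The flux normal to $\Gamma_0$, $(1-kc)u^2/2$, is concave, so the jump is an expansion shock. Consequently $\Gamma$ depends on how $u_0^\pm$ are extended across $\Gamma_0$, and your uniqueness argument fails. What is needed is the Lax condition along $\nu^0$, namely $f(u_0^-)\cdot\nu^0>\jump{F(u_0)}\cdot\nu^0/\jump{u_0}>f(u_0^+)\cdot\nu^0$, holding uniformly on $\Gamma_0$. With it, your continuity argument goes through.

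Some smaller points:
\begin{itemize}
\item $\jump{u}\neq 0$ is not what makes \eqref{e:RH(N,1)-1} hold. With the integral coefficients \eqref{e:ai,b}, \eqref{e:RH(N,1)-1} is, up to the normalization of $\nu$, $\jump{u}\,(b-\partial_t g-a\cdot\nabla_{x'}g)=0$, which the solution $g$ satisfies whatever the jump. The nonvanishing jump is needed only for the converse direction used in uniqueness.
\item Your uniqueness sentence presupposes that a competitor's surface $\tilde\Gamma$ also has impinging characteristics. This must be derived, for example from continuity of the competitor's traces, which equal $u_0^\pm$ at $t=0$, together with a continuation argument from the last time of agreement. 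Otherwise, uniqueness should be stated only among Lax-admissible solutions.
\item Getting a single $T$ for the whole unbounded $\Gamma$ requires $\inf_{\Gamma_0}(u_0^- -u_0^+)>0$. This is incompatible with compactly supported $u_0^\pm$ as stated, so the entropy hypothesis has to be localized to a bounded part of $\Gamma_0$.
\end{itemize}
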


The proof of this theorem is based on the following general result  for the quasi-linear first order PDE problems:
\begin{subequations}\label{e:QL1oPDE}
\begin{align}
    \partial_t u + \sum_{i=1}^N f_i(u,x,t)\partial_i u &=r(u,x,t), \,\,\,\;\; \textrm{ in } \R^N\times (0,T), 
    \label{e:QL1oPDE,i}
    \\
    u(\cdot,0) & = u_0(\cdot), \,\,\, \textrm{ on } \R^N \, ,
    \label{e:QL1oPDE,b}
\end{align}
\end{subequations}
where 
$u: (x,t)\in \R^N\times[0,T]\mapsto u(x,t) \in \R$, and $f$, $r$ satisfy:
\begin{subequations}\label{e:f->C1}
\begin{align}
&\;\;f=(f_1,\ldots, f_N),\;\;
f_i=f_i(z,x,t)\in  C^1(\R\times\R^N\times\R),\\
\forall\rho>0,\;\exists C(\rho)\geq0:\;\;&\;\; 
\|f\|_{L^\infty(B_\rho\times\R^N\times\R)}
+
\|\nabla_{(z,x)}f\|_{L^\infty(B_\rho\times\R^N\times\R)}\leq C(\rho)<\infty,
\end{align}
\end{subequations}
and 
\begin{subequations}\label{e:r->C1}
\begin{align}
&\;\; r=r(z,x,t)\in C^1(\R\times\R^N\times\R), 		\label{e:r->C1,1}\\
\exists C_r\geq0:&\;\;\;
\|r(z,\cdot,\cdot)-r(0,\cdot,\cdot)\|_{L^\infty(\R^N\times\R)}\leq C_r|z|,
				\label{e:r->C1,2}\\
\forall \rho>0,\; \exists C(\rho)\geq0:&\;\;\;
\|r\|_{L^\infty(B_\rho\times\R^N\times\R)}+
\|\nabla_{(z,x)} r\|_{L^\infty(B_\rho\times\R^N\times\R)}\leq C(\rho)<\infty,	\label{e:r->C1,3}
\end{align}
\end{subequations}
where $B_\rho$ is the ball in $\R$ with radius $\rho$ and center at the origin and $C(\rho)$ is  
a constant depending on $\rho$ (and $f$ and  $r$, but having $f$ and $r$ fixed we omit the dependence on $f$ and $r$).

The following result related to the problem \eqref{e:QL1oPDE} is very important for proving Theorem \ref{th:u,Sigma(N,1)}. 
The result is classical, but as we did not find it firmly stated in the literature, 
for sake of completeness and especially for establishing the estimates that we will need,
we present it in detail. 
\begin{theo}\label{th:QL1oPDE,sol}
Let $u_0\in C^1(\R^N)$ with compact support,
and $f$, resp. $r$, as in \eqref{e:f->C1}, resp \eqref{e:r->C1}.
Then there exists $T>0$ such that \eqref{e:QL1oPDE} has a unique classical solution 
$u\in C^1(\R^N\times[0,T])$ satisfying 
\begin{eqnarray}
\hspace*{-5mm}
\|u\|_{L^\infty(\R^N\times[0,T])} 
&\leq &
(\|u_0\|_{L^\infty(\R^N)} + T\|r(0,\cdot)\|_{L^\infty(\R^N\times\R)})e^{TC_r}=:\rho,
\label{e:|u|Linf}
\\
\hspace*{-5mm}
\|\partial_tu\|_{L^\infty(\R^N\times[0,T])} 
+
\|\nabla u\|_{L^\infty(\R^N\times[0,T])} 
&\leq &
\|\nabla u_0\|_{L^\infty(\R^N)} 
\label{e:|Dtxu|Linf}\\
&+&
C(\|\nabla_{(z,x)}f\|_{L^\infty(B_\rho\times\R^N\times\R)},
  \|\nabla_{(z,x)}r\|_{L^\infty(B_\rho\times\R^N\times\R)}),
\nonumber
\end{eqnarray}
where the constant $C(\cdot,\cdot)$ can be estimated by using  \eqref{e:|Da(X-I)|Linf+|Daz|Linf}, \eqref{e:|DxXm1|Linf}.
\end{theo}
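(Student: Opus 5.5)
We prove Theorem \ref{th:QL1oPDE,sol} by the method of characteristics. For $\alpha\in\R^N$ we look for a characteristic curve $t\mapsto (X(\alpha,t),t)$ and a value $z(\alpha,t)$ solving the ODE system
\begin{equation*}
\partial_t X(\alpha,t)=f(z(\alpha,t),X(\alpha,t),t),\quad
\partial_t z(\alpha,t)=r(z(\alpha,t),X(\alpha,t),t),\quad
X(\alpha,0)=\alpha,\;\; z(\alpha,0)=u_0(\alpha).
\end{equation*}
By \eqref{e:f->C1} and \eqref{e:r->C1} the right-hand side is $C^1$ and locally Lipschitz in $(z,X)$, uniformly in $X$ for $z$ bounded. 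So Cauchy--Lipschitz gives local solutions, and we must first show they exist globally on $[0,T]$.

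\textbf{Step 1: the $L^\infty$ bound.} Using \eqref{e:r->C1,2} we have $|\partial_t z|\leq \|r(0,\cdot)\|_{L^\infty}+C_r|z|$. Gronwall's lemma then gives $|z(\alpha,t)|\leq (\|u_0\|_{L^\infty}+t\|r(0,\cdot)\|_{L^\infty})e^{tC_r}\leq\rho$. This is \eqref{e:|u|Linf} once we know $u(X(\alpha,t),t)=z(\alpha,t)$. Since $z$ stays in $B_\rho$, $f$ is bounded along the trajectory, so $X$ cannot blow up. Hence the characteristics exist for all $t\in[0,T]$ and are $C^1$ in $(\alpha,t)$ by differentiable dependence on initial data.

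\textbf{Step 2: derivative bounds and invertibility of the flow.} Differentiating the ODE in $\alpha$ gives a linear system for $(\nabla_\alpha X,\nabla_\alpha z)$ with coefficients $\nabla_{(z,x)}f$ and $\nabla_{(z,x)}r$ evaluated on $B_\rho\times\R^N\times\R$, which are bounded by $C(\rho)$. Setting $\phi(t)=|\nabla_\alpha X-I|+|\nabla_\alpha z|$, we get
\begin{equation*}
\phi'\leq C(\rho)(1+\|\nabla u_0\|_{L^\infty}+\phi),
\end{equation*}
and Gronwall yields $\phi(t)\leq C(\rho)t(1+\|\nabla u_0\|_{L^\infty})e^{C(\rho)t}$. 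This is the estimate \eqref{e:|Da(X-I)|Linf+|Daz|Linf} referred to in the statement. Choosing $T$ small so that $\|\nabla_\alpha X-I\|_{L^\infty}\leq 1/2$, the map $\alpha\mapsto X(\alpha,t)$ is a $C^1$ diffeomorphism of $\R^N$. It is injective by the Neumann series and a mean-value argument, since $X-\mathrm{id}$ is a contraction, and surjective by Hadamard's theorem or the Banach fixed point theorem applied to $\alpha=x-(X(\alpha,t)-\alpha)$. Its inverse satisfies $\|\nabla_x X^{-1}\|_{L^\infty}\leq 2$, which is the bound \eqref{e:|DxXm1|Linf}. We expect this uniform-in-$\alpha$ invertibility, with the choice of $T$ depending on $\|\nabla u_0\|_{L^\infty}$ and $C(\rho)$, to be the main obstacle.

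\textbf{Step 3: existence, estimates and uniqueness.} Define $u(x,t)=z(X^{-1}(x,t),t)$. Then $u\in C^1$, and the chain rule along characteristics shows that $u$ satisfies \eqref{e:QL1oPDE}. For the gradient, $\nabla u=\nabla_\alpha z\,\nabla_xX^{-1}$, and using Step 2 its norm is at most $\|\nabla u_0\|_{L^\infty}$ plus a constant depending on $C(\rho)$. For the time derivative, the equation gives $\partial_tu=r-f\cdot\nabla u$, which is bounded in terms of the same quantities. Together these give \eqref{e:|Dtxu|Linf}, after absorbing the $\|\nabla u_0\|_{L^\infty}$-dependent terms, for $T$ small, into $C$. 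For uniqueness, if $v$ is another $C^1$ solution, let $Y$ be its characteristics, solving $\partial_tY=f(v(Y,t),Y,t)$ with $Y(\alpha,0)=\alpha$. Along $Y$, the quantity $w=v(Y,t)$ satisfies the same ODE system as $(X,z)$ with the same data. Uniqueness for Lipschitz ODEs then gives $(Y,w)=(X,z)$, hence $v=u$.
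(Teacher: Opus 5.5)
Your proposal is essentially the paper's proof: characteristics, Gronwall for $z$ and for the $\alpha$-derivatives of $(X,z)$, global inversion of $\alpha\mapsto X(\alpha,t)$ for small $T$, and $u(x,t)=z(X^{-1}(x,t),t)$. Your contraction-mapping proof of bijectivity, the no-blow-up argument for the characteristics, and the explicit uniqueness argument fill in what the paper delegates to Hadamard's theorem and to ``classical'' facts.

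There is one slip in Step 2. You should define $\phi=|\nabla_\alpha X-I|+|\nabla_\alpha z-\nabla u_0|$, as in \eqref{e:|Da(X-I)|Linf+|Daz|Linf}. With $|\nabla_\alpha z|$ instead, $\phi(0)=|\nabla u_0(\alpha)|$ is not zero, so Gronwall does not give the smallness of $\nabla_\alpha X-I$ that your invertibility step relies on.

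Also note that the final absorption into $C$ works for $\nabla u$ but not for $\partial_t u$. Indeed, $\partial_t u(\cdot,0)=r(u_0,\cdot,0)-f(u_0,\cdot,0)\cdot\nabla u_0$ does not shrink with $T$; for example, take $f\equiv c$, $r\equiv0$, $u=u_0(x-ct)$. So what you, like the paper, actually obtain is \eqref{e:|Dtxu|Linf} with $C$ depending also on $\|f\|_{L^\infty(B_\rho\times\R^N\times\R)}$, $\|r\|_{L^\infty(B_\rho\times\R^N\times\R)}$ and $\|\nabla u_0\|_{L^\infty(\R^N)}$.
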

\noindent
{\bf Proof}.
The characteristic equations of \eqref{e:QL1oPDE} are given by
\begin{subequations}\label{e:char-eqs,X,z}
\begin{align}
X(t;\alpha)
&
= \alpha+\int_0^t f(z(s;\alpha),X(s,\alpha),s)ds,\;\; t\geq0,\;\;
&&X(0,\alpha)= \alpha,\,\,\alpha\in\R^N,		\label{e:char-eqs,X}
\\
z(t;\alpha)&
=
u_0(\alpha)+\int_0^t r(z(s;\alpha),X(s,\alpha),s)ds,\;\; t\geq0,\;\;
&&
z(0,\alpha)=u_0(\alpha).	\label{e:char-eqs,z}
\end{align}
\end{subequations}
where  $X(t;\alpha)=(X_1(t;\alpha),\ldots,X_N(t;\alpha))$.
By a standard fixed point argument, it is easy to show that the equations \eqref{e:char-eqs,X,z} 
have a global $C^1$ unique solution. 
As $f$ and $r$ are $C^1$, form  classical results we know that $X(t;\alpha)$ and $z(t;\alpha)$ are 
$C^1$ in $(t,\alpha)\in\R\times\R^N$ and $C^2$ in $t\in\R$.

Now we construct the solution $u(x,t)$ by using $z(t;\alpha)$ and $X(t;\alpha)$.
For this we need to show that the map 
$(\alpha,t)\in \R^N\times[0,T]\mapsto (X(t;\alpha),t)\in \R^N\times[0,T]$ is $C^1$-invertible,
where $T>0$ is small enough.
This requires estimates for $X$, $z$ and their derivatives near $(\alpha,0)\in\R^{N+1}$.

From \eqref{e:char-eqs,z} and assumptions \eqref{e:r->C1,2}, \eqref{e:r->C1,3} we get
\begin{eqnarray*}
|z(t,\alpha)|
&\leq&
\|u_0\|_{L^\infty(\R^N)} + 
t\|r(0,\cdot)\|_{L^\infty(\R^N\times\R)}
+
C_r
\int_0^t|z(s,\alpha)|ds,
\end{eqnarray*}
which combined with Gr\"onwall's inequality gives
\begin{eqnarray}
\|z\|_{L^\infty([0,T]\times\R^N)}
&\leq&
(\|u_0\|_{L^\infty(\R^N)} + T\|r(0,\cdot)\|_{L^\infty(\R^N\times\R)})e^{TC_r}
=:\rho. 				\label{e:|z|Linf}
\end{eqnarray}
This estimate and \eqref{e:char-eqs,X} imply
\begin{equation}
\|X-I_N\|_{L^\infty([0,T]\times\R^N)}
\leq
T\|f\|_{L^\infty(B_\rho\times\R^N\times\R)}.		\label{e:|X|Linf}
\end{equation}
The equations \eqref{e:char-eqs,X,z} in combination with \eqref{e:|z|Linf} yield
\begin{subequations}
\begin{align}
\|\partial_t X\|_{L^\infty([0,T]\times\R^N)}
&\leq
\|f\|_{L^\infty(B_\rho\times\R^N\times\R)},	\label{e:|DtX|Linf}
\\
\|\partial_t z\|_{L^\infty([0,T]\times\R^N)}
&\leq
\|r\|_{L^\infty(B_\rho\times\R^N\times\R)}. 				\label{e:|Dtz|Linf}
\end{align}
\end{subequations}
By differentiating with respect to $\alpha$ the equations 
\eqref{e:char-eqs,X,z} gives
\begin{eqnarray*}
\left[
\begin{array}{c}{\partial_{\alpha_i} X(t;\alpha)}\\\partial_{\alpha_i} z(t;\alpha)\end{array}
\right]
&=&
\left[
\begin{array}{c}
e^i\\
\partial_{\alpha_i} u_0(\alpha)
\end{array}
\right]
\\
&+&
\int_0^t 
\left[
\begin{array}{cc}
\partial_xf(z(s;\alpha),X(s;\alpha),s),
&
\partial_zf(z(s;\alpha),X(s;\alpha),s)
\\
\partial_xr(z(s;\alpha),X(s;\alpha),s),
&
\partial_zr(z(s;\alpha),X(s;\alpha),s)
\end{array}
\right]
\cdot
\left[
\begin{array}{c}{\partial_{\alpha_i} X(t;\alpha)}\\\partial_{\alpha_i} z(t;\alpha)\end{array}
\right]
ds,
\end{eqnarray*}
with $e^i$ the $i$th vector of the canonical base in $\R^N$.
It implies that 
\begin{eqnarray*}
&&|\partial_{\alpha_i} X - e^i|+|\partial_{\alpha_i} z-\partial_{\alpha_i}u_0|\\
&\leq&
t
\left(
\sum_{h=f,r}\|\partial_xh\|_{L^\infty(B_\rho\times\R^N\times\R)}
+
\sum_{h=f,r}\|\partial_zh\|_{L^\infty(B_\rho\times\R^N\times\R)}\|\nabla u_0\|_{L^\infty(\R^N)}
\right)
\\
&+&
\sum_{\substack{h=f,r\\k=z,x}}
\|\partial_kh\|_{L^\infty(B_\rho\times\R^N\times\R)}
\int_0^t 
(|\partial_{\alpha_i} X - e^i|+|\partial_{\alpha_i} z|)ds,
\end{eqnarray*}
which, after using Gr\"onwall's inequality, gives for all $i=1,\ldots,N$
\begin{eqnarray}
&&\|\partial_{\alpha_i} X - e^i\|_{L^\infty([0,T]\times\R^N)}
+
\|\partial_{\alpha_i} z-\partial_{\alpha_i}u_0\|_{L^\infty([0,T]\times\R^N)}
\nonumber\\
&\leq&
T
\left(
\sum_{h=f,r}\|\partial_xh\|_{L^\infty(B_\rho\times\R^N\times\R)}
+
\sum_{h=f,r}\|\partial_zh\|_{L^\infty(B_\rho\times\R^N\times\R)}\|\nabla u_0\|_{L^\infty(\R^N)}
\right)\nonumber\\
&&
\exp\left(T\sum_{\substack{h=f,r\\k=z,x}}\|\partial_kh\|_{L^\infty(B_\rho\times\R^N\times\R)}\right).
\label{e:|Da(X-I)|Linf+|Daz|Linf}
\end{eqnarray}
Now we consider the map
\[
H:\R^N\times\R\mapsto \R^N\times\R,\quad
(x,t)=H(\alpha,t):=(X(t;\alpha),t) \, .
\]
Note that $H$ is $C^1$ and
\[
\nabla_{(\alpha,t)} H(t,\alpha)
=
\left[\begin{array}{c|c}\nabla_\alpha X&\partial_t X\\\hline 0&1\end{array}\right],
\]
which in view of \eqref{e:|Da(X-I)|Linf+|Daz|Linf} gives
\begin{eqnarray*}
{\rm det}(\nabla_{(\alpha,t)}H(\alpha,t))
&=&
{\rm det}(\nabla_\alpha X(t,\alpha,t))
=
{\rm det}(I_N+(\nabla_\alpha X(t,\alpha,t)-I_N))
\\
&\geq&
1-P(T),
\end{eqnarray*}
where $P(T)$ is a homogeneous polynomial of $T$
(its coefficients depend on $\|\partial_k h\|_{L^\infty(B_\rho\times\R^N\times\R)}$,
$h=f,r$, $k=z,x$).
Therefore, given any $\epsilon>0$, there exists $T$ small enough such that 
${\rm det}(\nabla_{(\alpha,t)} H(X(t;\alpha),t))\geq \epsilon$, for all 
$(t,\alpha)\in[0,T]\times\R^N$.
From Hadamard theorem 
it follows that 
$H$ is invertible in $[0,T]\times\R^N$, and we denote by $H^{-1}(x,t)=(X^{-1}(x,t),t)$ its inverse.
Furthermore, we can obtain a bound for $\nabla_{(x,t)}X^{-1}$, which will be used for estimating $\nabla u$,
as follows. Note  that
\[
\nabla_{(x,t)} H^{-1}(x,t)
=
\left[\begin{array}{c|c}\nabla_x X^{-1}&-\nabla_x X^{-1}\cdot\partial_t X\\\hline 0&1\end{array}\right] \, .
\]
Without loss of generality we assume 
\begin{equation}
|||I_N-\nabla_\alpha X|||_{L^\infty([0,T]\times\R^N)}
:=
\max\{N |||I_N-\nabla_\alpha X(t,\alpha)|||,\; (t,\alpha)\in[0,T]\times\R^N\}
<1,
\end{equation}
which in view of \eqref{e:|Da(X-I)|Linf+|Daz|Linf} is possible if we take $T$ smaller.
Here, for any matrix $A=(a_{i,j})\in\R^{N\times N}$,
we used the sub-multiplicative matrix norm
$|||A|||=N\max\{|a_{i,j}|,\; i,j=1,\ldots,N\}$, which satisfies $|||A\cdot B|||\leq |||A|||\cdot|||B|||$.
Then 
\begin{eqnarray}
\nabla_x X^{-1}(x,t)
&=&
(\nabla_\alpha X(t,\alpha))^{-1}
=
(I_N-(I_N-\nabla_\alpha X(t,\alpha)))^{-1},
\nonumber\\
&=&
\sum_{n=0}^\infty 
(I_N-\nabla_\alpha X(t,\alpha))^n,\quad x=X(t,\alpha);\;\; \mbox{\it hence}
 	\label{e:DxXm1}\nonumber
\\
||\nabla_x X^{-1}(x,t)\|_{L^\infty(\R^N\times[0,T])}
&:=&
\{\max\{\|\partial_{x_i}X^{-1}_j(x,t)\|_{L^\infty(\R^N\times[0,T])},\;\; i,j=1,\ldots,N\}	\nonumber\\
&\leq&
\frac{1}{N(1-N\|I_N-\nabla_\alpha X\|_{L^\infty([0,T]\times\R^N)})}.	\label{e:|DxXm1|Linf}
\end{eqnarray}

We define $u(x,t)=z(t;X^{-1}(x,t))$, $(x,t)\in \R^N\times[0,T]$.
It is classical to show that $u\in C^1(\R^N\times[0,T])$ and it is the unique classical solution of \eqref{e:QL1oPDE}.

The estimate \eqref{e:|u|Linf} follows from \eqref{e:|z|Linf}.
The estimate   for $\nabla u$ in \eqref{e:|Dtxu|Linf} follows from
$\partial_{x_i} u=\sum_{j=1}^N\partial_{\alpha_j}z\partial_{x_i}\alpha_j$, in combination with 
\eqref{e:|DxXm1|Linf} and \eqref{e:|Da(X-I)|Linf+|Daz|Linf}.
The estimate for $\partial_t u$ in \eqref{e:|Dtxu|Linf} follows from the estimate for 
$\nabla u$ and by using Equation \eqref{e:QL1oPDE}.
\hfill$\Box$
\\

\noindent
{\bf Proof of Theorem \ref{th:u,Sigma(N,1)}}.\\
(a)
We apply Theorem \ref{th:QL1oPDE,sol} with $u_0^\pm$ instead of $u_0$,
and obtain two $C^1(\R^N\times[0,T])\cap W^{1,\infty}(\R^N\times[0,T])$ solutions $u^\pm$.
The estimates \eqref{e:|upm|Linf} and \eqref{e:|Dtxupm|Linf} follow 
from \eqref{e:|u|Linf} and \eqref{e:|Dtxu|Linf}.
\\
(b)
With $u^{\pm}$ known, we consider the problem \eqref{e:RH(N,1)}, 
with $a$ and $b$ as given in \eqref{e:ai,b}.
We note that \eqref{e:RH(N,1)} is of the form \eqref{e:QL1oPDE}, in $N-1$ dimensions instead of $N$.
Furthermore, $a$ and $b$ are $C^1$ functions because $u^-$, $u^+$ and ${g}_0$ are $C^1$. 
Then \eqref{e:RH(N,1)} has a unique solution 
$g\in C^1(\R^{N-1}\times\times[0,T])\cap W^{1,\infty}(\R^{N-1}\times[0,T])$,
which shows that 
$\Gamma=\{(x',{g}(x',t),t),\; x'\in\R^{N-1},\; t\in[0,T^*]\}$ is a $C^1$  surface.

To conclude the proof we must show that $\Gamma$ is indeed the shock wave associated with 
\eqref{e:CL(N,1)}.
Indeed, for $T$ small enough, Condition \eqref{e:shockhere} shows that on $\Gamma$ intersect 
only pairs of characteristics, one emanating from $\{{g}_0<x_N\}$ and the other from $\{{g}_0>x_N\}$, and
$\Gamma$ does not depend on the extensions $u_0^\pm$.
\hfill$\Box$

\subsubsection{LeafNet neural network approximation error estimate for scalar shock waves}\label{sss:1xN,shock,error}
In this subsection, we present an estimate of the error of the approximation of the shock wave 
$\Gamma$ by a neural network network with the {\it LeafNet} algorithm presented in Subsection 
\ref{sss:1xN,shock,alg}. 
The following is the error estimate result for the approximation of $u^\pm$.
%
\begin{theo}\label{th:|bupm-upm|Linf}
Let ${\bf u}^\pm$ be a network minimizer of $\bs{\cal L}^\pm$ in ${\bf N}_A$, see \eqref{e:argmin(cLpm[u])} i.e. 
\begin{equation}
{\bf u}^\pm={\rm argmin}\{ \bs{\cal L}^\pm[{\bf u}],\;\; {\bf u}\in{\bf N}_A\}, 
\end{equation}
and consider
$(\bs{\varepsilon}^\pm_i,\bs{\varepsilon}^\pm_b)\in 
C^\infty(\R^N\times\R)\times C^\infty(\R^N)$ defined by
\begin{equation}\label{e:varepsion(L)}
\bs{\varepsilon}^\pm_i=L^\pm_i[{\bf u}^\pm],\quad
\bs{\varepsilon}^\pm_b=L^\pm_b[{\bf u}^\pm],
\end{equation}
with $L_i^\pm$ and $L^\pm_b$ given by \eqref{e:Lpm(N,1)}. 
Denoting $e^\pm={\bf u}^\pm-u^\pm$, we have
\begin{eqnarray}
\|e^\pm\|_{L^\infty(\R^N\times[0,T])} 
&\leq &
(\|\bs{\varepsilon}^\pm_b\|_{L^\infty(\R^N)} 
+ 
T\|\bs{\varepsilon}^\pm_i\|_{L^\infty(\R^N\times[0,T])})
e^{T\|f'\|_{L^\infty(B(0,\rho^\pm_0+\bs{\rho}^\pm_0))} \rho_1^\pm},
\label{e:|bupm-upm|Linf}
\end{eqnarray}
where 
$\rho^\pm_0=\rho^\pm_0(u_0^\pm)$,
$\bs{\rho}^\pm_0=\bs{\rho}^\pm_0(u_0^\pm,\bs{\varepsilon}^\pm_i,\bs{\varepsilon}^\pm_b)$ are given by \eqref{e:|upm,bupm|Linf}
and
$\rho_1^\pm=\rho_1^\pm(\nabla u_0,f')$ is given by \eqref{e:|Dxupm|Linf}.
\end{theo}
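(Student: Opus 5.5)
The plan is to write an equation for the error $e^\pm={\bf u}^\pm-u^\pm$ that has the structure of \eqref{e:QL1oPDE}, and then reuse the Gr\"onwall argument along characteristics from the proof of Theorem \ref{th:QL1oPDE,sol}. By definition \eqref{e:varepsion(L)}, ${\bf u}^\pm$ solves
\[
\partial_t {\bf u}^\pm + f({\bf u}^\pm)\cdot\nabla {\bf u}^\pm=\bs{\varepsilon}^\pm_i,\qquad {\bf u}^\pm(\cdot,0)=u_0^\pm+\bs{\varepsilon}^\pm_b .
\]
So ${\bf u}^\pm$ solves a perturbed problem of type \eqref{e:QL1oPDE}, with $r=\bs{\varepsilon}^\pm_i$ and initial datum $u_0^\pm+\bs{\varepsilon}^\pm_b$. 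Applying \eqref{e:|u|Linf} to it gives the a priori bound $\|{\bf u}^\pm\|_{L^\infty}\le \bs{\rho}^\pm_0$, with $\bs{\rho}^\pm_0$ depending on $u_0^\pm,\bs{\varepsilon}^\pm_i,\bs{\varepsilon}^\pm_b$ and $C_r=0$. Likewise \eqref{e:|upm|Linf} gives $\|u^\pm\|_{L^\infty}\le\rho^\pm_0$, and \eqref{e:|Dtxupm|Linf} gives $\|\nabla u^\pm\|_{L^\infty}\le\rho^\pm_1$. These are the quantities referred to in \eqref{e:|upm,bupm|Linf} and \eqref{e:|Dxupm|Linf}.

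Next we subtract the two equations, keeping the transport by ${\bf u}^\pm$ and moving the coefficient difference onto the exact solution:
\[
\partial_t e^\pm + f({\bf u}^\pm)\cdot\nabla e^\pm = \bs{\varepsilon}^\pm_i - \big(f({\bf u}^\pm)-f(u^\pm)\big)\cdot\nabla u^\pm .
\]
The mean value theorem gives $f({\bf u}^\pm)-f(u^\pm)=e^\pm\int_0^1 f'(u^\pm+s e^\pm)\,ds$. Since $|u^\pm+se^\pm|\le\rho^\pm_0+\bs{\rho}^\pm_0$, the right-hand side has the form $\bs{\varepsilon}^\pm_i - c(x,t)\,e^\pm$ with $|c|\le \|f'\|_{L^\infty(B(0,\rho^\pm_0+\bs{\rho}^\pm_0))}\rho^\pm_1$. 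This is a linear transport equation for $e^\pm$ with a smooth velocity field $f({\bf u}^\pm(x,t))$, which is $C^\infty$ because ${\bf u}^\pm$ is a network.

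Then we integrate along the characteristics $\dot Y=f({\bf u}^\pm(Y,t))$, $Y(0)=\alpha$. These exist globally because the field is bounded and locally Lipschitz. Along them
\[
\tfrac{d}{dt}e^\pm(Y(t),t)=\bs{\varepsilon}^\pm_i-c\,e^\pm ,\qquad e^\pm(Y(0),0)=\bs{\varepsilon}^\pm_b(\alpha).
\]
Taking absolute values and applying Gr\"onwall exactly as in \eqref{e:|z|Linf} gives
\[
|e^\pm|\le(\|\bs{\varepsilon}^\pm_b\|_{L^\infty}+T\|\bs{\varepsilon}^\pm_i\|_{L^\infty})\,e^{T\|f'\|\rho^\pm_1},
\]
which is \eqref{e:|bupm-upm|Linf}.

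To obtain the bound at every point $(x,t)$, we need the flow map $\alpha\mapsto Y(t;\alpha)$ to be onto $\R^N$. Here this is simpler than in Theorem \ref{th:QL1oPDE,sol}: the ODE has a globally defined, locally Lipschitz field, so we can integrate backward from any $(x,t)$. The step I expect to be the main obstacle is controlling the sizes involved. Specifically, we must check that $\bs{\rho}^\pm_0$ is finite and that $\nabla u^\pm$ is the only gradient appearing. Placing the gradient on $u^\pm$ rather than on ${\bf u}^\pm$ is precisely what avoids needing any derivative bound on the network.
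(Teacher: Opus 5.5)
Your proposal is correct and follows essentially the same route as the paper. You use the same error equation, with transport field $f({\bf u}^\pm)$ and the mean-value coefficient multiplying $\nabla u^\pm$, so only $\rho^\pm_1$ enters and no bound on the network gradient is needed. You then apply the Gr\"onwall $L^\infty$ bound along characteristics, which the paper obtains by citing \eqref{e:|u|Linf} of Theorem \ref{th:QL1oPDE,sol} rather than redoing it.

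Doing the characteristic argument directly, as you do, is if anything slightly cleaner. The characteristics of $f({\bf u}^\pm)$ do not depend on $e^\pm$, so no smallness of $T$ is needed for the flow map, and the coefficient containing $\nabla u^\pm$ only needs to be bounded, not $C^1$. One small correction: $f({\bf u}^\pm)$ is $C^1$ rather than $C^\infty$, since $f=F'$ is only $C^1$; this is still enough for local Lipschitz continuity and global characteristics.
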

\noindent
{\bf Proof}.
\color{black}
First, we note that  $u^\pm$ solves \eqref{e:upm(1,N)} and ${\bf u}^\pm$ solves
\eqref{e:varepsion(L)},  both problems like \eqref{e:QL1oPDE}. 
Applying \eqref{e:|u|Linf} and  \eqref{e:|Dtxu|Linf} of Theorem \ref{th:QL1oPDE,sol} to $u^\pm$ and ${\bf u}^\pm$ gives
\begin{subequations}\label{e:|upm,bupm|Linf}
\begin{align}
\|u^\pm\|_{L^\infty(\R^N\times[0,T])}
&\leq
\|u^\pm_0\|_{L^\infty(\R^N)}=:\rho^\pm_0,		
\\
\|{\bf u}^\pm\|_{L^\infty(\R^N\times[0,T])}
&\leq
\|u^\pm_0\|_{L^\infty(\R^N)}
+
\|\bs{\varepsilon}_b^\pm\|_{L^\infty(\R^N)}	
T\|\bs{\varepsilon}_i^\pm\|_{L^\infty(\R^N\times[0,T])}=:\bs{\rho}^\pm_0,  	
\end{align}
\end{subequations}
and
\begin{subequations}\label{e:|Dxupm,Dxbupm|Linf}
\begin{align}
\|\nabla u^\pm\|_{L^\infty(\R^N\times[0,T])}
&\leq
\|\nabla u_0\|_{L^\infty(\R^N)}
+
C^\pm\|f'\|_{L^\infty(B(0,\rho^\pm_0))}=:\rho^\pm_1,	\label{e:|Dxupm|Linf}
\\
\|\nabla {\bf u}^\pm\|_{L^\infty(\R^N\times[0,T])}
&\leq
\|\nabla u_0\|_{L^\infty(\R^N)}
+
\|\nabla_x \bs{\varepsilon}_b^\pm\|_{L^\infty(\R^N)} 		\nonumber\\
&+
{\bf C}^\pm
(\|f'\|_{L^\infty(B(0,\bs{\rho}^\pm_0))},
\|\nabla\bs{\varepsilon}_i^\pm\|_{L^\infty(\R^N\times[0,T])})=:\bs{\rho}^\pm_1, 	\label{e:|Dxbupm|Linf}
\end{align}
\end{subequations}
and the constants $C^\pm$, ${\bf C}^\pm$ are  as in Theorem \ref{th:QL1oPDE,sol}.

From the definition of $u^\pm$ and the assumption \eqref{e:varepsion(L)}, $e^\pm$  satisfies
\begin{subequations}\label{e:epm}
\begin{align}
\partial_t e^\pm
+
f({\bf u}^\pm)\cdot\nabla e^\pm 
&=
\bs{\varepsilon}^\pm_i(x,t) 	\nonumber\\
&
-\left(\int_0^1f'(u^\pm+s({\bf u}^\pm-u^\pm))\cdot\nabla u^\pm ds\right)e^\pm,\;\; {\rm in}\;\; \R^N\times(0,T),
\\
e^\pm(\cdot,0)&=\bs{\varepsilon}^\pm_b,\;\; {\rm on}\; \R^N.
\end{align}
\end{subequations}
We apply  estimate \eqref{e:|u|Linf} of Theorem \ref{th:QL1oPDE,sol} with
$f(z,x,t)\sim f({\bf u}^\pm(x,t))$,
$u_0\sim \bs{\varepsilon}^\pm_b$ and
$r(z,x,t)
\sim \bs{\varepsilon}^\pm_i(x,t)
-
\left(\int_0^1f'(u^\pm(x,t)+s({\bf u}^\pm(x,t)-u^\pm(x,t)))\cdot\nabla u^\pm(x,t) ds\right)z$.
Note that 
\begin{equation}\label{e:|r(0,.,.)|,thm2.3}
\|r(0,\cdot,\cdot)|_{L^\infty(\R^N\times\R)}
\leq 
\|\bs{\varepsilon}^\pm_b\|_{L^\infty(\R^N\times\R)},
\end{equation}
and by using \eqref{e:|upm,bupm|Linf} and \eqref{e:|Dxupm|Linf}  the constant $C_r$ is estimated as follows
\begin{eqnarray}
&&
\|r(z,\cdot,\cdot)-r(0,\cdot,\cdot)\|_{L^\infty(\R^N\times\R))}
\nonumber\\
&\leq&
\left\|
\int_0^1 f'(u^\pm(\cdot,\cdot)+s({\bf u}^\pm(\cdot,\cdot)-u^\pm(\cdot,\cdot)))
\cdot
\nabla u^\pm(\cdot,\cdot) ds
\right\|
|z|
\nonumber\\
&\leq&
\int_0^1
\left\|
f'((1-s)u^\pm(\cdot,\cdot)+s{\bf u}^\pm(\cdot,\cdot))
\right\|_{L^\infty(\R^N\times\R)}
\|\nabla u^\pm(\cdot,\cdot)\|_{L^\infty(\R^N\times\R)}
|z|
\nonumber\\
&\leq&
\|f'\|_{L^\infty(B(0,{\rho}_0^\pm+\bs{\rho}_0^\pm))}
\rho_1^\pm
|z|
\nonumber\\
&=:&
C_r|z|.		\label{e:C_r,thm2.3}
\end{eqnarray}
From the estimate \eqref{e:|u|Linf} with \eqref{e:|r(0,.,.)|,thm2.3} and \eqref{e:C_r,thm2.3} 
we get
\begin{eqnarray*}
\|e^\pm\|_{L^\infty(\R^N\times[0,T])} 
&\leq &
(\|\bs{\varepsilon}^\pm_b\|_{L^\infty(\R^N)} 
+ 
T\|\bs{\varepsilon}^\pm_i\|_{L^\infty(\R^N\times[0,T])}) 
e^{T\|f'\|_{L^\infty(B(0,\rho^\pm_0+\bs{\rho}^\pm_0))}\rho_1^\pm},
\end{eqnarray*}
which proves the theorem.
\color{black}
\hfill$\Box$\\

\noindent
Now we will estimate the error of approximating $g$ by the network ${\bf g}$, 
see Subsection \ref{sss:1xN,shock,alg}.
\begin{theo}\label{th:|bg-g|Linf}
Let ${\bf u}^\pm$ be a network minimizer of $\bs{\cal L}^\pm[{\bf u}]$ as in Theorem 
\ref{th:|bupm-upm|Linf} and $e^\pm={\bf u}^\pm-u^\pm$, where
\begin{equation}\label{e:varepsion(L),bis}
\bs{\varepsilon}^\pm_i=L^\pm_i[{\bf u}^\pm],\quad
\bs{\varepsilon}^\pm_b=L^\pm_b[{\bf u}^\pm].
\end{equation}
Let ${g}$ be the unique $C^1$-solution of \eqref{e:RH(N,1)} satisfying
$K[g]=(0,0)$, where $K=(K_i,K_b)$ is defined in \eqref{e:K[g]=0}.
Let also ${\bf g}$ be a network solution of \eqref{e:argmin(cK[h])},
and let $(\bs{\sigma}^\pm_i,\bs{\sigma}^\pm_b)\in C^\infty(\R^N\times[0,T])\times C^\infty(\R^N)$ 
defined by 
\begin{equation}\label{e:bfK[g]=sigma}
\bs{\sigma}_i={\bf K}_i[{\bf g}],\quad
\bs{\sigma}_b={\bf K}_b[{\bf g}],
\end{equation}
where ${\bf K}$ is defined in \eqref{e:bfK[h]}. Then 
\begin{eqnarray}
\|{\bf g}-g\|_{L^\infty(\R^{N-1}\times[0,T])}
&\leq&
\bigg(
\|\bs{\sigma}_b\|_{L^\infty(\R^{N-1})}
+
\|\bs{\sigma}_i\|_{L^\infty(\R^{N-1}\times[0,T])}
+ 		\nonumber\\
&&
\hspace*{4mm}
\sum_{*=\pm}
C^*_0
T(\|\bs{\varepsilon}^*_b\|_{L^\infty(\R^N)} 
+ 
T\|\bs{\varepsilon}^*_i\|_{L^\infty(\R^N\times[0,T])})
\bigg)
e^{
C_1T},	\label{e:|bg-g|Linf}
\end{eqnarray}
where
\begin{eqnarray}
C_0^\pm
&=&
(1+\|\nabla g\|_{L^\infty(\R^N)})\|f'\|_{L^\infty(B(0,\rho_0^++\rho_0^-+\bs{\rho}_0^++\bs{\rho}_0^-))})\cdot
\exp{\left(T\rho^\pm_1\|f'\|_{L^\infty(B(0,\rho^\pm_0+\bs{\rho}^\pm_0))}\right)},
				\label{e:Cpm0}\\
C_1&=&
(1+\|\nabla g\|_{L^\infty(\R^N)})(\rho^+_1+\rho^-_1)\|f'\|_{L^\infty(B(0,\rho_0^++\rho_0^-+\bs{\rho}_0^++\bs{\rho}_0^-))},
			\label{e:C1}\\
\rho^\pm_0&=&\|u_0^\pm\|_{L^\infty(\R^N)},\qquad (see\;\, \eqref{e:|upm,bupm|Linf})\\
\bs{\rho}^\pm_0
&=&
\|u^\pm_0\|_{L^\infty(\R^N)}
+
\|\bs{\varepsilon}_b^\pm\|_{L^\infty(\R^{N-1})}
+
T\|\bs{\varepsilon}_i^\pm\|_{L^\infty(\R^N\times[0,T])},\qquad (see\;\, \eqref{e:|upm,bupm|Linf})	\\
\rho_1^\pm&=&
\|\nabla u_0\|_{L^\infty(\R^N)}
+
C^\pm\|f'\|_{L^\infty(B(0,\rho^\pm_0))}.\qquad (see\;\, \eqref{e:|Dxupm|Linf})
\end{eqnarray}
\end{theo}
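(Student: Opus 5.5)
We follow the strategy of the proof of Theorem \ref{th:|bupm-upm|Linf}: write an equation for the error $\eta:={\bf g}-g$ of the form \eqref{e:QL1oPDE} in $N-1$ space dimensions, then apply the $L^\infty$ estimate \eqref{e:|u|Linf} of Theorem \ref{th:QL1oPDE,sol}. Subtracting $K_i[g]=0$ from ${\bf K}_i[{\bf g}]=\bs{\sigma}_i$, and using the definitions \eqref{e:ai,b} and \eqref{e:bfai,bfb}, we get
\begin{equation*}
\partial_t\eta+{\bf a}({\bf g},x',t)\cdot\nabla_{x'}\eta
=\bs{\sigma}_i-\big({\bf a}({\bf g},x',t)-a(g,x',t)\big)\cdot\nabla_{x'}g+\big({\bf b}({\bf g},x',t)-b(g,x',t)\big),
\qquad \eta(\cdot,0)=\bs{\sigma}_b .
\end{equation*}
This is a linear transport equation for $\eta$ with velocity ${\bf a}({\bf g},\cdot,\cdot)$. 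Since ${\bf g}$ is smooth, this velocity is a known $C^1$ coefficient, and only the right-hand side needs to be controlled.

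The key step is to split each coefficient difference into two parts. The first part comes from replacing ${\bf u}^\pm$ by $u^\pm$ at the same point $(x',{\bf g},t)$. The second comes from moving the evaluation point from ${\bf g}$ to $g$ with the exact $u^\pm$. For instance,
\begin{equation*}
{\bf b}({\bf g},x',t)-b(g,x',t)=\big[{\bf b}({\bf g},x',t)-b({\bf g},x',t)\big]+\big[b({\bf g},x',t)-b(g,x',t)\big].
\end{equation*}
The integrand of the first bracket is a difference of $f_N$ evaluated at convex combinations of ${\bf u}^\pm$ and of $u^\pm$. By the mean value theorem it is bounded by $\|f'\|_{L^\infty(B(0,\rho_0^++\rho_0^-+\bs{\rho}_0^++\bs{\rho}_0^-))}(|e^+|+|e^-|)$. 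The sup norms of $e^\pm$ are then bounded by Theorem \ref{th:|bupm-upm|Linf}, which gives
\begin{equation*}
\|e^\pm\|_{L^\infty}\le(\|\bs{\varepsilon}^\pm_b\|_{L^\infty}+T\|\bs{\varepsilon}^\pm_i\|_{L^\infty})\exp\big(T\rho^\pm_1\|f'\|_{L^\infty(B(0,\rho^\pm_0+\bs{\rho}^\pm_0))}\big).
\end{equation*}
The exponential factor here is exactly the one appearing in $C_0^\pm$. The second bracket is Lipschitz in the $x_N$ argument, with constant at most $\|f'\|\,(\rho_1^++\rho_1^-)$ from \eqref{e:|Dxupm|Linf}, times $|\eta|$. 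We treat the $a$-terms the same way, but they carry the extra factor $|\nabla_{x'}g|$, which produces the factor $(1+\|\nabla g\|_{L^\infty})$ in both $C_0^\pm$ and $C_1$.

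Thus the right-hand side has the form $r(z,x',t)=r(0,x',t)+c(x',t)z$. We have $|c|\le C_1$, so $C_r=C_1$, and
\begin{equation*}
\|r(0,\cdot,\cdot)\|_{L^\infty}\le\|\bs{\sigma}_i\|_{L^\infty}+\sum_{\pm}C_0^\pm(\|\bs{\varepsilon}^\pm_b\|_{L^\infty}+T\|\bs{\varepsilon}^\pm_i\|_{L^\infty}).
\end{equation*}
To be rigorous one should write the mean value term as $c\,\eta$ with $c=\int_0^1\partial_{x_N}(\cdots)(g+s\eta)\,ds$, so that the equation is genuinely linear in $\eta$. Estimate \eqref{e:|u|Linf}, or equivalently Grönwall along the characteristics of ${\bf a}({\bf g},\cdot,\cdot)$, gives $\|\eta\|_{L^\infty}\le(\|\bs{\sigma}_b\|+T\|r(0)\|)e^{TC_1}$. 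This yields \eqref{e:|bg-g|Linf}, up to the harmless factor $T$ in front of $\|\bs{\sigma}_i\|$, which we absorb since $T\le1$ can be assumed or is simply dropped in the statement.

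The main obstacle is the bookkeeping needed to justify the constants. Specifically, the arguments of $f'$ must remain in the ball $B(0,\rho_0^++\rho_0^-+\bs{\rho}_0^++\bs{\rho}_0^-)$; this follows from \eqref{e:|upm,bupm|Linf} because all convex combinations lie there. In addition, the $x_N$-Lipschitz constants of $u^\pm$ must be controlled by $\rho_1^\pm$ uniformly over the segment between $g$ and ${\bf g}$; this holds because the bound \eqref{e:|Dxupm|Linf} is global on $\R^N\times[0,T]$.
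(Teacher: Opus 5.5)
Your proposal is correct and follows essentially the same route as the paper. You derive the same linear transport equation for ${\bf g}-g$ with velocity ${\bf a}({\bf g},x',t)$, split $r_a$ and $r_b$ into a part controlled by $\|e^\pm\|_{L^\infty}$ through Theorem~\ref{th:|bupm-upm|Linf} and a part linear in ${\bf g}-g$ whose coefficient involves $\partial_{x_N}u^\pm$ (bounded by $\rho_1^\pm$), and conclude with \eqref{e:|u|Linf}, which the paper likewise applies even though the right-hand side is only $C^0$; the factor $T$ in front of $\|\bs{\sigma}_i\|_{L^\infty}$ that you flag is dropped in the same way in the paper's own final inequality, so your handling of it matches the statement.
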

\noindent
{\bf Proof}. 
First, we establish some estimates for ${g}$ and ${\bf g}$ that will be used 
for the estimate of ${\bf g}-g$.
In view of \eqref{e:K[g]=0}, \eqref{e:bfK[h]} and \eqref{e:RH(N,1)}, 
$g$ and ${\bf g}$ solve a quasi-linear PDE identical to \eqref{e:QL1oPDE}, in dimension $N-1$ instead of $N$. Identifying for each of equations \eqref{e:K[g]=0}, \eqref{e:bfK[h]},
the functions $f$ and $r$ and $u_0$ (with reference to \eqref{e:QL1oPDE}) and 
applying the estimate \eqref{e:|u|Linf} to $g$ and ${\bf g}$ gives
\begin{eqnarray}
\hspace*{-8mm}
\|{g}\|_{L^\infty(\Omega'\times[0,T])}
&\leq&
\|{g}_0\|_{L^\infty(\R^{N-1})}
+
T\|f_N\|_{L^\infty(B(0,\rho^-_0 + \rho^+_0))}
=:
\rho,		\label{e:|g|Linf<}
\\
\hspace*{-8mm}
\|{\bf g}\|_{L^\infty(\Omega'\times[0,T])}
&\leq&
\|{g}_0\|_{L^\infty(\R^{N-1})}+\|\bs{\sigma}_b\|_{L^\infty(\R^{N-1})} \nonumber\\
&+&
T(\|f_N\|_{L^\infty(B(0,\bs{\rho}^-_0+\bs{\rho}^+_0))} + \|\bs{\sigma}_i\|_{L^\infty(\R^{N-1}\times[0,T])})
=:\bs{\rho}.		\label{e:|hg|Linf<}
\end{eqnarray}

Next, we estimate  $e:={\bf g}-g$. For this, we show first that $e$ satisfies a first order PDE like \eqref{e:QL1oPDE}.
Indeed, subtracting $K[{g}]=0$ from ${\bf K}[{\bf g}]=(\bs{\sigma}_i,\bs{\sigma}_b)$ we get
\begin{subequations}\label{e:e}
\begin{align}
\partial_t e
+
{\bf a}({\bf g},x',t)\cdot\nabla e
&=
\bs{\sigma}_i 
+
({\bf b}({\bf g},x',t)-b({g},x',t))	
-
({\bf a}({\bf g},x',t)-a({g},x',t))\cdot\nabla{g}			\nonumber\\
&=:
\bs{\sigma}_i 
+
r_b(e,x',t)- r_a(e,x',t)		\quad \mbox{\it in}\;\; \R^{N-1}\times[0,T],		\label{e:e,i}\\
e(\cdot,0)&=\bs{\sigma}_b\quad\mbox{\it on}\;\; \R^{N-1}.	\label{e:e,b}
\end{align}
\end{subequations}
We note that
the equation \eqref{e:e} is similar \eqref{e:QL1oPDE}, and $e$ is a $C^1$-solution of \eqref{e:e}.
Furthermore, we note that the fluxes ${\bf a}({\bf g},x',t)$ in \eqref{e:e,i} are $C^1$,
but the right hand side of \eqref{e:e,i} is only $C^0$ which is due to the presence of $\nabla{g}$.
However, due to the fact that ${\bf a}({\bf g},x',t)$ satisfies the assumptions \eqref{e:f->C1},
the estimate \eqref{e:|u|Linf} applies.

We will rewrite $r_a$ and $r_b$ as functions of the form $k(x',t)e$. For this set
\[
\ell(s,u,v)=su+(1-s)v,
\quad
e^\pm={\bf u}^\pm-u^\pm.
\]

Then $r_b$ can be written as follows
\begin{eqnarray}
r_b(e,x',t)
&=&
{\bf b}({\bf g}(x',t),x',t)-b({\bf g}(x',t),x',t)
+
b({\bf g}(x',t),x',t)-b(g(x',t),x',t)	\nonumber\\
&=&
\int_0^1 
(f_N(\ell(s,{\bf u}^+(x',{\bf g},t),{\bf u}^-(x',{\bf g},t)))-
 f_N(\ell(s,      u^+(x',{\bf g},t),      u^-(x',{\bf g},t))))ds	\nonumber\\
&+&
\int_0^1 
(f_N(\ell(s,u^+(x',{\bf g},t),u^-(x',{\bf g},t)))-
 f_N(\ell(s,u^+(x',      g,t),u^-(x',     g,t))))ds	\nonumber\\
&=&
\int_0^1
\int_0^1
f_N'(\ell(\alpha,\ell(s,{\bf u}^+(x',{\bf g},t),{\bf u}^-(x',{\bf g},t)),
                 \ell(s,     u^+(x',{\bf g},t),      u^-(x',{\bf g},t)))d\alpha 
                        	\nonumber\\
&&
\hspace*{5mm}
\times
\ell(s,e^+(x',{\bf g},t),e^-(x',{\bf g},t))ds			\nonumber\\
&+&
\int_0^1
\int_0^1
f_N'(\ell(\alpha,\ell(s,u^+(x',{\bf g},t),u^-(x',{\bf g},t)),
                 \ell(s,u^+(x',     g,t),u^-(x',      g,t)))d\alpha 
                        	\nonumber\\
&&
\hspace*{5mm}\times
\ell
\bigg(s, 
{\scriptsize \int_0^1}\partial_{x_N}u^+(x',\ell(\alpha,{\bf g},g),t)))d\alpha,
{\scriptsize \int_0^1}\partial_{x_N}u^-(x',\ell(\alpha,{\bf g},g),t)))d\alpha
\bigg)ds		\nonumber\\
&&
\times
e,			\label{e:rb,detailed} 
\end{eqnarray}
which shows that $r_b$ satisfies \eqref{e:r->C1} (without the term $\nabla _{(z,x)}$) with 
\begin{subequations}\label{e:|rb|Linf}
\begin{align}
\|r_b(0,\cdot,\cdot)\|_{L^\infty(\R^{N-1}\times[0,T])}
&\leq
\|f_N'\|_{L^\infty(B(0,\rho_0^++\rho_0^-+\bs{\rho}_0^++\bs{\rho}_0^-))}
\sum_{*=\pm}\|e^+\|_{L^\infty(\R^N\times[0,T])},			\label{e:|rb(0)|Linf}\\
C_r^b
&\leq
\|f_N'\|_{L^\infty(B(0,2(\rho_0^++\rho_0^-)))}
\sum_{*=\pm}\|\nabla u^*\|_{L^\infty(\R^N\times[0,T])}.
\end{align}
\end{subequations}
Similarly, for $r_a$ we have
\begin{eqnarray}
r_a(e,x',t)
&=&
\Big(
({\bf a}({\bf g}(x',t),x',t)-a({\bf g}(x',t),x',t))
+
(a({\bf g}(x',t),x',t)-a(g(x',t),x',t))
\Big)\cdot\nabla g	\nonumber\\
&=&
\int_0^1
\int_0^1
f_i'(\ell(\alpha,\ell(s,{\bf u}^+(x',{\bf g},t),{\bf u}^-(x',{\bf g},t)),
                 \ell(s,      u^+(x',{\bf g},t),      u^-(x',{\bf g},t)))d\alpha 
                        	\nonumber\\
&&
\hspace*{5mm}
\times
\ell(s,e^+(x',{\bf g},t),e^-(x',{\bf g},t))
\partial_i g ds			\nonumber\\
&+&
\int_0^1
\int_0^1
f_i'(\ell(\alpha,\ell(s,u^+(x',{\bf g},t),u^-(x',{\bf g},t)),
                 \ell(s,u^+(x',     g,t),u^-(x',      g,t)))d\alpha 
                        	\nonumber\\
&&
\hspace*{5mm}
\times
\ell
\bigg(s, 
{\scriptsize \int_0^1}\partial_{x_N}u^+(x',\ell(\alpha,{\bf g},g),t)))d\alpha,
{\scriptsize \int_0^1}\partial_{x_N}u^-(x',\ell(\alpha,{\bf g},g),t)))d\alpha
\bigg) ds	\nonumber\\
&&
\times \partial_i g 
e\, .
		\label{e:ra,detailed} 
\end{eqnarray}
Like for $r_b$, it implies that $r_a$ satisfies \eqref{e:r->C1} (without the term with $\nabla_{(z,x)}$,
which is not needed for the estimate \eqref{e:|u|Linf}) with 
\begin{subequations}\label{e:|ra|Linf}
\begin{align}
\|r_a(0,\cdot,\cdot)\|&_{L^\infty(\R^{N-1}\times[0,T])}		\nonumber\\
&\leq
\|f'\|_{L^\infty(B(0,\rho_0^++\rho_0^-+\bs{\rho}_0^++\bs{\rho}_0^-))}\|\nabla g\|_{L^\infty(\R^N)}
\sum_{*=\pm}\|e^*\|_{L^\infty(\R^N\times[0,T])},			\label{e:|ra(0)|Linf}\\
C_r^a
&\leq
\|f_N'\|_{L^\infty(B(0,2(\rho_0^++\rho_0^-)))}
\|\nabla g\|_{L^\infty(\R^N)}
\sum_{*=\pm}\|\nabla u^*\|_{L^\infty(\R^N\times[0,T])}.
\end{align}
\end{subequations}
Then applying \eqref{e:|u|Linf} to \eqref{e:e}, and using \eqref{e:|ra|Linf} and \eqref{e:|rb|Linf} 
gives
\begin{eqnarray*}
|e|
&\leq&
(\|\bs{\sigma}_b\|_{L^\infty(\R^{N-1})}
+
\|\bs{\sigma}_i\|_{L^\infty(\R^{N-1}\times[0,T])}
+ 
T\|r_a(0,\cdot)+r_b(0,\cdot)\|_{L^\infty(\R^{N-1}\times[0,T])}
)
e^{T(C_r^a+C_r^b)}		\nonumber\\
&\leq&
\big(
\|\bs{\sigma}_b\|_{L^\infty(\R^{N-1})}
+
\|\bs{\sigma}_i\|_{L^\infty(\R^{N-1}\times[0,T])}
+ 		\nonumber\\
&&
\hspace*{4mm}
T(1+\|\nabla g\|_{L^\infty(\R^N)})\|f'\|_{L^\infty(B(0,\rho_0^++\rho_0^-+\bs{\rho}_0^++\bs{\rho}_0^-))})
\sum_{*=\pm}\|e^*\|_{L^\infty(\R^N\times[0,T])}
\big)
		\nonumber\\
&&
\times
\exp
\Big(
T(1+\|\nabla g\|_{L^\infty(\R^N)})\|f'\|_{L^\infty(B(0,\rho_0^++\rho_0^-+\bs{\rho}_0^++\bs{\rho}_0^-))})
\sum_{*=\pm}\|\nabla u^*\|_{L^\infty(\R^N\times[0,T])}
\Big),		\nonumber
\end{eqnarray*}
which after replacing $\|e^\pm\|_{L^\infty(\R^N)}$ as given by \eqref{e:|bupm-upm|Linf} and 
$\|\nabla u^\pm\|_{L^\infty(\R^N)}$ as given by  \eqref{e:|Dxupm,Dxbupm|Linf}
completes the proof.
\hfill$\Box$

\begin{rem}\label{r:|bg - g|Linf}
The estimate \eqref{e:|bg-g|Linf} provides a bound for the error 
$\|{\bf g}-g\|_{L^\infty(\R^{N-1}\times[0,T])}$ in terms of data of the problem 
($u_0$ and $f$), and the network approximation residuals 
$\bs{\sigma}_i$, $\bs{\sigma}_b$, and 
$\bs{\varepsilon}_i^\pm$, $\bs{\varepsilon}_b^\pm$,
in the whole space. 
The estimate \eqref{e:|bg-g|Linf} still provides an estimate for  ${\bf g}-g$
in the case, for example $\Omega'=\prod_{i=1}^{N-1}(a_i,b_i)$ and $\Omega=\Omega'\times(a_N,b_N)$, as demonstrated by the following arguments.
\\
(i) 
We consider the extensions $u^\pm_0$ with compact support, and therefore the solution 
$u^\pm$ will have support in $D\times[0,T]$, with appropriate
$D=\prod_{i=1}^{N}(c_i,d_i)$, and $g$ will have support in $D'=\prod_{i=1}^{N-1}(c_i,d_i)$.
As  we can always arrange to have the neural networks functions identically zero outside $D\times[0,T]$,
the estimate \eqref{e:|bg-g|Linf} is reduced in $D'\times[0,T]$.
\\
(ii)
We will assume that the Sobolev index number $k,m,p,q$ in  the LeafNet algorithm in Subsection 
\ref{sss:1xN,shock,alg} are such that the embeddings 
$H^m(D)\subset L^\infty(D)$,
$H^k(D\times[0,T])\subset L^\infty(D\times[0,T])$,
$H^q(D')\subset L^\infty(D')$,
$H^p(D'\times[0,T])\subset L^\infty(D'\times[0,T])$ are continuous.
Then the estimate \eqref{e:|bg-g|Linf} provides a bound of
$\|{\bf g}-g\|_{L^\infty(D'\times[0,T])}$ of the form
\begin{eqnarray}
\|{\bf g}-g\|_{L^\infty(D'\times[0,T])}
&\leq&
\bigg(
\|\bs{\sigma}_b\|_{H^m(\R^{N-1})}
+
\|\bs{\sigma}_i\|_{H^k(D'\times[0,T])}
+ 		\nonumber\\
&&
\hspace*{4mm}
\sum_{*=\pm}
D^*_0
T(\|\bs{\varepsilon}^*_b\|_{H^q(D)} 
+ 
T\|\bs{\varepsilon}^*_i\|_{H^p(D\times[0,T])})
\bigg)
e^{D_1T},	\label{e:|bg-g|Linf(D'x[0,T]}
\end{eqnarray}
with appropriate constants $D^*_0$, $D_1$.
The estimate \eqref{e:|bg-g|Linf(D'x[0,T]} states that the $L^\infty$ error of shock approximation
is bounded by the Sobolev norm of network residuals 
$\bs{\sigma}_i$, $\bs{\sigma}_b$, and 
$\bs{\varepsilon}_i^\pm$, $\bs{\varepsilon}_b^\pm$.
\end{rem}
%

\subsection{Shock wave formation}\label{ss:1xN,shock,form}
The method presented in Section \ref{ss:1xN,shock,method}  can be used to handle the shock formation.
Namely, we assume that at the initial time there is no shocks, and at a certain time 
$t_*\in(0,T)$ a shock appears, which develops  until the time $T$.
We assume that there is only one shock which is formed, and we denoted by $\Gamma$ the corresponding discontinuity surface. 

Let $t_1\in(t_*,T)$ and set $\Gamma_1=\Gamma\cap\{t=t_1\}$. We assume that $\Gamma_1$ is the graph of a $C^1$-function $g_1$, i.e. $\Gamma_1=\{(x',{g}_1(x')),\; x'\in\Omega'\}$. 
If $u_1(x):=u(x,t_1)$ were known, then we could proceed as in Section \ref{ss:1xN,shock,method} to track the shock,
with $u_1$ instead of $u_0$ and $\Gamma_1$ instead of $\Gamma_0$.

Even though $u_1$ and $\Gamma_1$ are not known, we can still proceed as in Subsection 
\ref{ss:1xN,shock,method} as follows. 
We consider a virtual discontinuity curve $\Gamma_0$ with equations $x_N=g_0(x')$, which separates 
$\Omega$ in $\Omega^\pm$, and generates two initial functions $u_0^\pm$ as in \eqref{e:u0=u0+,u0-}.
Of course $u_0^-=u_0^+$ on $\Gamma_0$.
Next, we solve $u^\pm$ and $g$ as in \eqref{e:upm(1,N)} and \eqref{e:RH(N,1)}.
The time formation $t_*$ is smallest time when $\jump{u}$ on $\Gamma$ becomes discontinuous.
Therefore, the solution $u(\cdot,t)$ is continuous for $t\in[0,t_*]$, 
any $g$ will solve \eqref{e:RH(N,1)}.
However, for $t\in(t_*,T)$, ${g}$ must satisfy the Rankine-Hugoniot condition and therefore 
${g}$ will parameterize the shock surface $\Gamma$.  
We illustrate this method with numerical results in Section \ref{s:numerics}.

\subsection{Neural network solution for rarefaction waves}\label{ss:1xN,rw}
In this section we assume that $u_0$ has only one discontinuity 
$\Gamma_0=\{(x',{g}_0(x'),\; x'\in\Omega'\}$, which develops a rarefaction wave. 
We can  apply the classical PINN methods, which provide a smooth approximation of $u$ with neural networks. 
As $u$ is discontinuous along $\Gamma_0$, this network will then not approximate accurately the solution near $\Gamma_0$.

\subsubsection{Discontinuous rarefaction waves}\label{subsubsec:rw2}
We can approximate sharply the solution even near the discontinuity curve $\Gamma_0$ by using a kind of 
self similar solution as follows.
As the discontinuity on $\Gamma_0$ develops a rarefaction wave, there exists a domain
$Q_{\textrm{rw}}\subset Q$, bounded by the characteristic surfaces $\Gamma^\pm$, 
made of characteristic curves starting on $\Gamma_0$ given by
\begin{eqnarray}
\hspace*{-10mm}
\Gamma^\pm
&=&
\{(x',x_N,t),\;\; 
x_i = \alpha_i + t f_i(u_0^\pm(\alpha,{g}_0(\alpha))),\;\;  (\alpha,t)\in Q',\;\; i=1,\ldots,N-1,
\nonumber\\
\hspace*{-10mm}
&&
\hspace*{23mm}
x_N = {g}_0(\alpha) + t f_N(u_0^\pm(\alpha,{g}_0(\alpha)) \} \, .		\label{e:Sigmapm}
\end{eqnarray}
At least for small $T$, the map $(\alpha,t)\in Q'\mapsto (x',t)$, $x'=(x_1,\ldots,x_{N-1})$,
is invertible. Then \eqref{e:Sigmapm} provides $x_N=g^\pm(x',t)$.
Depending on $f$ and $u_0^\pm$, one may solve $g^\pm$ analytically.
Alternatively, ${g}^\pm$ can be evaluated numerically, see Subsection \ref{sssec:alg,rw}.
In the following, we assume that $g^\pm$ are known, and therefore
$\Gamma^\pm$ is the graph of the function $x_N={g}^\pm(x',t)$.
Hence, we have
\begin{equation}
Q_{\textrm{rw}}=\{(x',x_N,t),\;\; x_N\in({g}^-(x',t),{g}^+(x',t)),\; (x',t)\in Q'\}.
\end{equation}
The solution $u$ satisfies
\begin{subequations}\label{e:u,rw}
\begin{align}
\partial_t u + f(u)\cdot\nabla u & = 0,\;\;\; \mbox{in}\;\; Q_{\textrm{rw}},
\\
u&=u^-\;\; \mbox{on}\;\; \Gamma^-,
\\
u&=u^+\;\; \mbox{on}\;\; \Gamma^+.
\end{align}
\end{subequations}
As the solution $u$ is discontinuous on $\Gamma_0$, to approximate it accurately with neural networks,
we consider the change of variables
\[
(x',x_N,t)\in Q\mapsto 
(x',s_N,t)\in \Omega'\times(0,1)\times(0,T),\quad
s_N=\cfrac{x_N-{g}^-(x',t)}{{g}^+(x',t)-{g}^-(x',t)},
\]
with corresponding inverse
\[
(x',s_N,t)\mapsto
(x',x_N,t),\quad
x_N={g}^-(x',t) + s_N({g}^+(x',t)-{g}^-(x',t)),
\]
and define $w$ by 
\begin{equation}\label{e:w,rw}
w(x',s_N,t)=u(x',x_N,t).
\end{equation}
We can write \eqref{e:u,rw} in terms of the function $w$. For this we evaluate 
\begin{eqnarray*}
\partial_t u(x',x_N,t)
&=& 
\partial_t w(x',s_N,t) 
\\
&-&
\partial_{s_N} w(x',s_N,t) 
\bigg(
\cfrac{\partial_t{g}^-(x',t)}{{g}^+(x',t)-{g}^-(x',t)} +
\\
&&
\hspace*{29mm}
\cfrac{x_N-{g}^-(x',t)}{({g}^+(x',t)-{g}^-(x',t))^2}
\partial_t ({g}^+(x',t)-{g}^-(x',t))
\bigg) \, ,
\\
\partial_{x_i} u(x',x_N,t)
&=&
\partial_{x_i}w(x',s_N,t) 
\\
&-&
\partial_{s_N} w(x',s_N,t) 
\bigg(
\cfrac{\partial_{x_i}{g}^-(x',t)}{{g}^+(x',t)-{g}^-(x',t)}+
\\
&&
\hspace*{29mm}
\cfrac{x_N-{g}^-(x',t)}{({g}^+(x',t)-{g}^-(x',t))^2}
\partial_{x_i}({g}^+(x',t)-{g}^-(x',t))
\bigg),
\\
\partial_{x_N} u(x',x_N,t)
&=&
\partial_{s_N} w(x',s_N,t) \cfrac{1}{{g}^+(x',t)-{g}^-(x',t)},
\end{eqnarray*}
for $i=1,\ldots,N-1$.
Combining these relations with \eqref{e:u,rw} gives for $w=w(x',s_N,t)$:
\begin{subequations}\label{e:Lw,rw}
\begin{align}
\partial_tw
+ \sum_{i=1}^{N-1} h_i\partial_{x_i}w 
+ h_N\partial_{s_N}w 
&=0,\;\;
\mbox{in}\;\; \{(x',s_N,t)\in \Omega'\times(0,1)\times(0,T)\},
\\
w(x',0,t)&=u^-(x',{g}^-(x',t),t),\;\; \mbox{on}\;\; \Omega'\times (0,T),
\\
w(x',1,t)&=u^+(x',{g}^+(x',t),t),\;\; \mbox{on}\;\; \Omega'\times(0,T),
\\
w(x',s_N,0)&=w_0(x',s_N),\;\; \mbox{on}\;\; \Omega'\times(0,1),
\end{align}
\end{subequations}
with $w_0$ any smooth function satisfying 
\begin{equation}
w_0(x',0)=u^-(x',g_0(x')),\quad
w_0(x',1)=u^+(x',g_0(x')),\quad x'\in\Omega',			\label{e:w0}
\end{equation}
for example
\[
w_0(x',s_N)
=(1-s_N)u_0^-(x',{g}_0(x')) 
+
s_Nu_0^+(x',{g}_0(x')),
\] 
and
\begin{eqnarray}
h_i(w,x',s_N,t)
&=&
f_i(w),				\label{e:hi(...)}
\\
h_N(w,x',s_N,t)
&=&
-
\bigg(
\cfrac{\partial_t{g}^-(x',t)}{{g}^+(x',t)-{g}^-(x',t)} 
+		\nonumber\\
&&
\hspace*{7mm}
\cfrac{x_N(x',s_N,t)-{g}^-(x',t)}{({g}^+(x',t)-{g}^-(x',t))^2}
\partial_t ({g}^+(x',t)-{g}^-(x',t))
\bigg)
\nonumber\\
&&
-f_i(w)
\bigg(
\cfrac{\partial_{x_i}{g}^-(x',t)}{{g}^+(x',t)-{g}^-(x',t)}+
\nonumber\\
&&
\hspace*{16mm}
\cfrac{x_N(x',s_N,t)-{g}^-(x',t)}{({g}^+(x',t)-{g}^-(x',t))^2}
\partial_{x_i}({g}^+(x',t)-{g}^-(x',t))
\bigg)
\nonumber\\
&&
-
f_N(w)
\cfrac{1}{{g}^+(x',t)-{g}^-(x',t)}.						\label{e:hN(...)}
\end{eqnarray}
We can solve $w$ in \eqref{e:Lw,rw} and then the solution $u$ of \eqref{e:u,rw} is given by
\begin{equation}\label{e:u,rw,up,w,um}
u(x',x_N,t)
=
\left\{
\begin{array}{ll}
u^-(x',x_N,t),& x_N<{g}^-(x',t),
\\
w\left(x',\cfrac{x_N-{g}^-(x',t)}{{g}^+(x',t)-{g}^-(x',t)},t\right),&
	{g}^-(x',t)<x_N<{g}^+(x',t),
	\\
u^+(x',x_N,t),& x_N>{g}^+(x',t) \, .
\end{array}
\right.
\end{equation}

\subsubsection{Neural network algorithm for rarefaction waves}\label{sssec:alg,rw}
We implement the method presented in Subsection \ref{ss:1xN,rw} with neural networks. The method yet again benefits from the accurate approximation of rarefaction waves avoiding spurious regularization. More specifically, we introduce two  $N+1$-dimensional neural networks ${\bf u}^\pm(x',x_N,t)$ for solving \eqref{e:upm(1,N)}.
In addition we introduce two $N$-dimensional neural networks 
${\bf g}^\pm$ for describing the curved faces $\Gamma^\pm$ of $Q_{\textrm{rw}}$ as defined in \eqref{e:Sigmapm},
and one $N+1$-dimensional neural network ${\bf w}(x',s_N,t)$ for solving \eqref{e:Lw,rw}.
Then we proceed as follows

\begin{enumerate}
\item 
\mbox{\it
Find approximate solutions ${\bf u}^\pm$ to IVP \eqref{e:upm(1,N)}, by solving \eqref{e:argmin(cLpm[u])}}.
\item 
{\mbox\it
If possible, solve analytically  $x_N=g^\pm(x',t)$ as given by 
\eqref{e:Sigmapm}, i.e.} 
\begin{subequations}\label{e:alphapm}
\begin{align}
x_i &= \alpha_i + t f_i(u_0^\pm(\alpha,{g}_0(\alpha))),\quad  i=1,\ldots,N-1,
 						\quad \alpha=(\alpha_1,\ldots,\alpha_{N-1}),
\\
x_N &= g_0(\alpha) + t f_N(u_0^\pm(\alpha,{g}_0(\alpha))).
\end{align}
\end{subequations}
\mbox{\it If not, we search for a neural network approximation of $x_N=g^\pm(x',t)$}. 
In this goal, we first introduce the following operator
\begin{eqnarray}
G^\pm[g]
&=&
g_0(\alpha) + t f_N(u_0^\pm(\alpha,{g}_0(\alpha)))
- 
g\Big(
\prod_{i=1}^{N-1}(\alpha_i + t f_i(u_0^\pm(\alpha,{g}_0(\alpha)))),t
\Big). 			\label{e:Hpm[h]}
\end{eqnarray}

Then, given an architecture $A$ in $N$ dimensions, with parameters $\Theta_A$ and corresponding set of admissible functions
${\bf N}_A$, and for $k\in\N_0$ and ${\bf g}\in{\bf N}_A$ we define
\begin{subequations}
\begin{align}
\bs{\cal G}^\pm[{\bf g}]
&=      
\sum_{\substack{\alpha\in\N_0^{N-1},\ \beta\in\N_0\\ |\alpha|+\beta\leq k}}
\int_0^T
\int_{\Omega'}
|D^\alpha_{x'}D^\beta_t G^\pm_i[{\bf g}]|^2 dx' dt \, .			\label{e:cHpm(N,1)}
\end{align}
\end{subequations}
We then look for ${\bf g}^\pm$ defined by 
\begin{equation}
{\bf g}^\pm=
\textrm{argmin}\{\bs{\cal G}^\pm[{\bf g}],\;\; {\bf g}\in {\bf N}_A\},	\label{e:min(cHpm(N,1))}
\end{equation}
hence providing the network approximations ${\bf g}^\pm$ of $g^\pm$.
In the following we assume that $g^\pm$ or ${\bf g}^\pm$ are known, and for simplicity they are denoted by $g^\pm$.
\item
\mbox{\it Solution to the IVP \eqref{e:Lw,rw}}.
Motivated by \eqref{e:Lw,rw} we introduce the operator $R$ as follows
\begin{subequations}\label{e:L[w]}
\begin{align}
R[w]&=(R_i[w],R_\pm[w],R_b[w]),\\
R_i[w]&
=\partial_tw
+ \sum_{i=1}^{N-1} h_i\partial_{x_i}w + h_N\partial_{s_N}w,\\
R_\pm[w]&=
(w(x',0,t)-u^-(x',{g}^-(x',t),t),
w(x',1,t)-u^+(x',{g}^+(x',t),t)),
\\
R_b[w]&=w(x',s_N,0)-w_0(x',s_N),
\end{align}
\end{subequations}
and for a given architecture $A$ in $N+1$ dimensions, with feasible network space
${\bf N}_A$, consider the operator $\bs{\cal R}[{\bf v}]$, ${\bf v}\in{\bf N}_A$, given by
\begin{subequations}\label{e:cL[w]}
\begin{align}
\bs{\cal R}[{\bf v}]
= &      
\sum_{\substack{\alpha\in\N_0^N,\beta\in\N_0\\ |\alpha|+\beta\leq k}}
\int_0^T
\int_0^1
\int_{\Omega'}
|D^{\alpha'}_{x'}D^{\alpha_N}_{s_N}D^\beta_t R_i[{\bf v}]|^2 dx'ds_Ndt			\label{e:cLi[w]}
\\
&+
\sum_{\substack{\alpha'\in\N_0^{N-1},\beta\in\N_0\\ |\alpha'|+\beta\leq m}}
\int_0^T
\int_{\Omega'}
|D^{\alpha'}_{x'}D^\beta_t R_\pm[{\bf v}]|^2 dx'dt			\label{e:cLb1[w]}
\\
&+
\sum_{\substack{\alpha\in\N_0^N\\ |\alpha|\leq m}}
\int_0^1\int_{\Omega'}
|D^{\alpha'}_{x'}D^{\alpha_N}_{s_N}R_b^2[{\bf v}]|^2 dx'ds_N,		\label{e:cLb2[w]}
\end{align}
\end{subequations}
where $\alpha=(\alpha',\alpha_N)$.
Then ${\bf w}\in{\bf N}_A$ is defined by
\begin{equation}
  	{\bf w} = \textrm{argmin}\{\bs{\cal R}[{\bf v}],\;\; {\bf v}\in{\bf N}_A\}. 	  \label{e:min(L[w])}
\end{equation}
\item
\mbox{\it 
Reconstruction of the approximate solution ${\bf u}$ to \eqref{e:CL(N,1)}, for $(x',x_N,t)\in Q$}
    \begin{eqnarray*}
      {\bf u}(x',x_N,t)=
      \left\{
      \begin{array}{ll}
        {\bf u}^-(x',x_N,,t), & \textrm{ if } x_N<{ g}^-(x',t)\, ,
        \vspace*{2mm}\\
	    \displaystyle{{\bf w}\left(x',\frac{x_N-{g}^-(x',t)}{{ g}^+(x',t)-{ g}^-(x',t)},t\right)}, 
	    &\textrm{ if }  x_N \in({ g}^-(x',t),{ g}^+(x',t)), 
        \vspace*{2mm}\\
        {\bf u}^+(x',x_N,t), & \textrm{ if } x_N>{ g}^+(x',t)\, .       
        \end{array}
      \right.
    \end{eqnarray*}
\end{enumerate}

\subsection{Systems of hyperbolic conservation laws in $N$ dimensions}\label{s:mxN,HCL}
Here we provide a short discussion on the extension of the methods we developed in Section \ref{s:1xN,HCL}
for hyperbolic scalar conservation laws to $N$ dimensional hyperbolic systems of conservation laws, given by
\begin{subequations}\label{e:CL(N,m)}
\begin{align}
    \partial_t u + \nabla\cdot F(u) 
    :&=
    \partial_t u + \sum_{k=1}^N\partial_k(F_k(u)) \nonumber\\
    & = 
    \partial_t u + \sum_{k=1}^N\nabla F_k(u)\cdot \partial_k u
    = 0\hspace*{10mm} \textrm{ in } Q=\Omega\times(0,T), \\
    u(\cdot,0) & =  u^0(\cdot),\hspace*{5mm} \textrm{ on } \Omega,
\end{align}
\end{subequations}
where 
$\Omega\subset\R^N$ is open, 
$u: (x,t)=(x_1,\ldots,x_N,t)\in Q \mapsto u(x,t)=(u_i(x,t)) \in \R^m$, $m\geq2$,
$\partial_t u =(\partial_t u_i)\in\R^m$,
$\partial_k u =(\partial_k u_i)\in\R^m$, 
$F(u)=(F_1(u),\ldots,F_N(u)):\R^m\mapsto \R^{m\times N}$, with 
$F_k=(F_k^i)\in\R^m$,
$\nabla F_k=(\partial_j F_k^i)\in\R^{m\times N}$,
$F_k^i \in C^2(\R;\R)$,
$i,j=1,\ldots,m$, $k=1,\ldots,N$.

Moreover, we assume that $u^0\in L^1(\Omega;\R^m)\cap TV(\Omega;\R^m)$. We again impose incoming flux boundary condition $\partial \Omega$. We refer to \cite{jmg} for a full description of the incoming flux conditions for systems.

\subsubsection{Shock waves for the systems of hyperbolic conservation laws}\label{ss:mxN,shock}
The LeafNet algorithm we have presented in Subsection \ref{ss:1xN,shock,method} can be applied for systems, with  minimal changes. Indeed, the Rankine-Hugoniot condition \eqref{e:RH(N,1)}, which is the key ingredient for tracking the shock wave, in the case of systems is written  componentwise as
\begin{eqnarray}
\nu_t\jump{u_i} + \sum_{k=1}^N\jump{F_k^i(u)}\nu_k
&=& 
0,\quad i=1,\ldots,m,			\label{e:RH(N,m)-1}
\end{eqnarray}
where $\nu$ is given in \eqref{e:nu(N)}. 
As in Subsection \ref{ss:1xN,shock,method}, rearranging \eqref{e:RH(N,m)-1}, we deduce that the function
$x_N={g}(x',t)$ representing the discontinuity surface $\Gamma$ satisfies 
\begin{subequations}\label{e:RH(N,m)}
\begin{align}
\partial_t{g} 
+ 
\frac{\jump{F_1^i(u)}}{\jump{u_i}}\partial_1{g}
+\cdots+
\frac{\jump{F_{N-1}^i(u)}}{\jump{u_i}}\partial_{N-1}{g}
&=:					\nonumber\\
\partial_t{g} 
+
a_1^i(g,x',t)\partial_1g
+\cdots+
a_{N-1}^i(g,x',t)\partial_{N-1}g
&=
r^i(g,x',t)				  \label{e:RH(N,m),i}\\
&:=
\frac{\jump{F_N^i(u)}}{\jump{u_i}},\quad i=1,\ldots,m,	\nonumber\\
{g}(\cdot,0)
  &=
  {g}_0(\cdot) \, .	 	 \label{e:RH(N,m),b}
\end{align}
\end{subequations}
We assume that $F$ is $C^2$ and therefore, each equation of \eqref{e:RH(N,m)} has a unique solution,  which is $g$. Therefore, we can find $g$ by solving any of the equations \eqref{e:RH(N,m)}. Then, theoretically, it would be enough to solve just one of the equations \eqref{e:RH(N,m)}. We note that from numerical viewpoint, it may not be appropriate because the equation that one decides to solve may represent instabilities, which  may happen if the corresponding  $\jump{u_i}$ is small, and may result in a poor solution of the other equations. In our numerical simulations, we have considered the following equivalent form of \eqref{e:RH(N,m)}
\begin{subequations}\label{e:RH(N,m2)}
\begin{align}
{\jump{u_i}}\partial_t{g} 
+ 
\jump{F_1^i(u)}\partial_1{g}
+\cdots+
\jump{F_{N-1}^i(u)}\partial_{N-1}{g}
&=
\jump{F_N^i(u)},\quad i=1,\ldots,m,\label{e:RH(N,m),i2}\\
{g}(\cdot,0)
  &=
  {g}_0(\cdot) \, .	 	 \label{e:RH(N,m),b2}
\end{align}
\end{subequations}
Hence, our algorithm for constructing $g$ consists in solving each equation of \eqref{e:RH(N,m2)} by minimizing the total loss, which is the sum of losses corresponding to each equation of \eqref{e:RH(N,m2)}. Hence, ``solving'' the shock $\Gamma$ of \eqref{e:CL(N,m)} is equivalent to solve the $N-1$ dimensional 
IVP \eqref{e:RH(N,m2)}. The LeafNet method consists in:
%
\begin{equation}\label{m:shock-wave,Nxm}
\left.
\begin{array}{lp{140mm}}
1)& 
\mbox{\it Solving (independently) the two IBVP like \eqref{e:upm(1,N)},  associated to \eqref{e:CL(N,m)},
which provide }\\
&
\mbox{\it $u^-$ and $u^+$ in $Q$}.\\
2)&
\mbox{\it Solving the Rankine-Hugoniot \eqref{e:RH(N,m2)}, which provides ${g}$ in $Q'$}.
\\
3)& 
\mbox{\it Reconstructing the solution from the initial IBVP \eqref{e:upm(1,N)} for $(x',x_N,t)\in Q$,}
\\
& \mbox{\it as follows}
    \begin{eqnarray*}
      u(x',x_N,t)=
      \left\{
      \begin{array}{ll}
        u^-(x',x_N,t), & \textrm{ if } x_N<{g}(x',t)\, ,\\
         u^+(x',x_N,t), & \textrm{ if } x_N>{g}(x',t)\, .       
        \end{array}
      \right.
      \end{eqnarray*}
\end{array}
\right.
\end{equation}
The neural network implementation of this algorithm is made as in Subsection 
\ref{sss:1xN,shock,alg},  but with vector valued neural networks.

\subsubsection{Shock wave formation}\label{ss:mxN,wave-formation}\label{subsec:shock-form,Nxm}
We can apply also the method we presented in Subsection \ref{ss:1xN,shock,form} for finding 
the time $t_*$ of the shock wave formation and tracking the shock wave.

Indeed, like in Subsection  \ref{ss:1xN,shock,form} for tracking the scalar HCL.
We start with a ``virtual" discontinuity $\Gamma_0$.
Then we approximate $u^\pm$, and next the solution $g$ to Rankine-Hugoniot equation \eqref{e:RH(N,m)}. The time formation $t_*$ is smallest time when $\jump{u}$ on $\Gamma$ becomes discontinuous.

\section{Numerical experiments}\label{s:numerics}
In this section, we present a series of numerical experiments illustrating the methods that we have developed in this paper, and applied to two-dimensional scalar shock waves, one- and two-dimensional rarefaction waves, and two-dimensional systems of HCL.  \\
In all the numerical computations, we use fully connected neural networks with $\tanh$-activation functions, and the loss functions are constructed with $H^1$-norms. A standard gradient descent algorithm from the library {\tt optax} is used in the training of the networks. In addition, the initial and boundary conditions are directly encoded in the neural networks. Finally, in most of the computations the learning rate is taken equal to $2\times 10^{-3}$. 
\subsection{Scalar HCL}
In this subsection, we propose a numerical illustration of the above strategy applied to scalar HCL 
for tracking shock waves, rarefaction waves and shock formation, see subsections \ref{ss:1xN,shock,method}, \ref{ss:1xN,rw} and \ref{ss:1xN,shock,form}. 

\medskip

\noindent{\bf Experiment 1 (2-D shock)} 
Here we consider \eqref{e:CL(N,1)}, with $F_1(u)=u^4/4$, $F_2(u)=u^2/4$, $\Omega=(a_1,b_1)\times(a_2,b_2)$,  and the following initial condition $u_0$
\begin{eqnarray*}
  u_0(x_1,x_2) =
  \left\{
  \begin{array}{ll}
    {u}_0^l(x_1,x_2), & x_1< {g}_0(x_2) \, , \\
    {u}_0^r(x_1,x_2), & x_1> {g}_0(x_2) \,  ,
    \end{array}
  \right.
\end{eqnarray*}
where ${g}_0(x_2)=\beta (x_2-a_2)^2$ and
\begin{eqnarray*}
  \left.
  \begin{array}{lcl}
  {u}_0^l(x_1,x_2)     & = &     \gamma (x_1+b_1)\big(1-\tanh[\epsilon({g}_0(x_2)-(x_1-(a_1+b_1)/2))]\big)\\
& &  +  \gamma ({g}_0(x_2)+b)\big(1-\tanh[\epsilon({g}_0(x_2)-(x_1-(a_1+b_1)/2))]\big),\\
 
  u_0^r(x_1,x_2) &  = & 1 \, .
\end{array}
\right.
\end{eqnarray*}
and $\epsilon$, $\beta$, $\gamma$ positive constants. Numerically we select $a_2=b_2=-a_1=-b_1=2$, $\epsilon=20$, $\beta=1/20$, $\gamma=1/4$, and $T=1$.  In the numerical experiments, the networks for approximating the IBVP on $u^{l,r}$ have 2 hidden layers with 5 neurons, and have $1$ hidden layer with 10 neurons for ${g}$.  The networks are trained with $300$ randomly chosen collocation points in $x_1$, $x_2$ and $t$. We report in Fig. \ref{fig1a}, the graph of $u^l_0$ (Left) after extension, and the graph of $u_0$ (Right) in $\Omega$. We then report in Fig. \ref{fig1b}, the graph of $u^l(\cdot,\cdot,T)$ (Left) as well as the graph of the reconstructed solution $u(\cdot,\cdot,T)$ (Right). Finally, we report in Fig. \ref{fig1c}, the graph of ${g}$ in 
$Q_2$ defining the time-space surface of discontinuity.
\begin{figure}[hbt!]
\begin{center}
  \includegraphics[height=6cm,keepaspectratio]{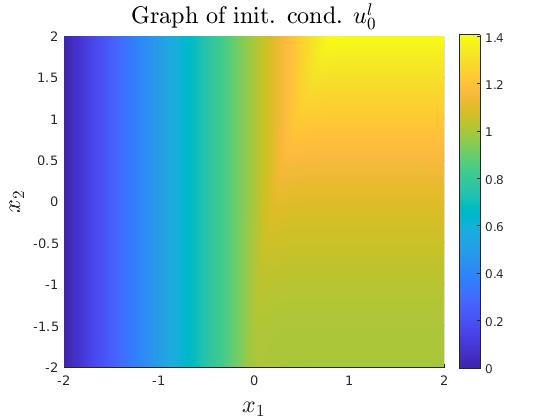}
  \includegraphics[height=6cm,keepaspectratio]{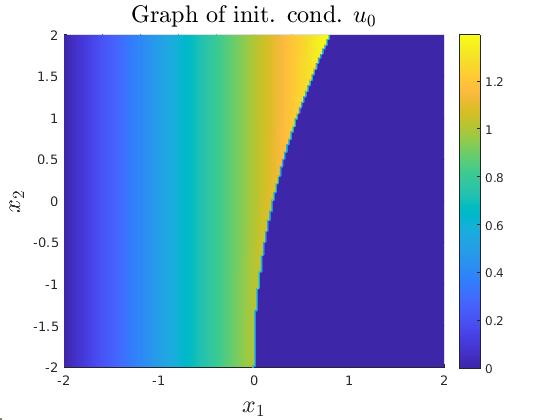}
\end{center}
\caption{{\bf Experiment 1.} (Left) Graph of  $u^l_0$. (Right) Graph of $u_0$.}
\label{fig1a}
\end{figure}

\begin{figure}[hbt!]
\begin{center}
  \includegraphics[height=6cm,keepaspectratio]{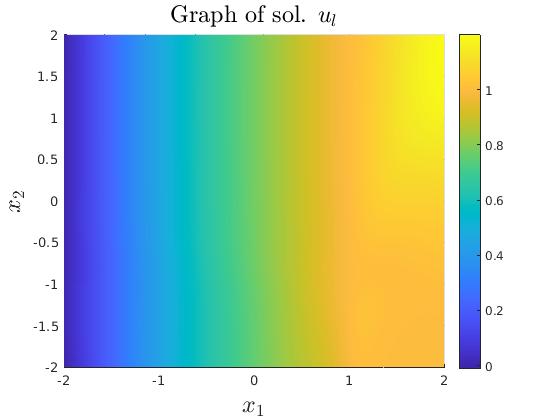}
  \includegraphics[height=6cm,keepaspectratio]{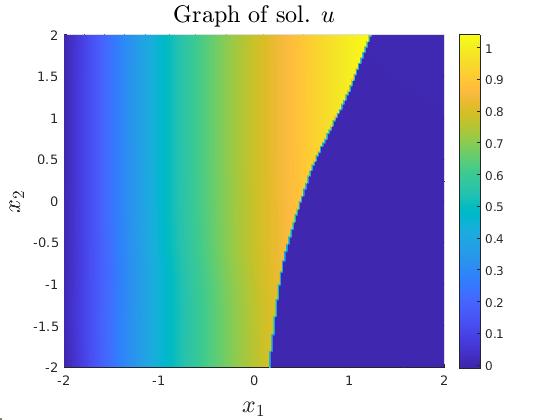}
\end{center}
\caption{{\bf Experiment 1.} 
(Left) Solution ${\bf u}_l(\cdot,\cdot,T)$. 
(Right) Solution ${\bf u}(\cdot,\cdot,T)$.}
\label{fig1b}
\end{figure}
For completeness, we also report the graph of convergence of the minimization algorithm in Fig. \ref{fig1c} (Right).
\begin{figure}[hbt!]
\begin{center}
  \includegraphics[height=6cm,keepaspectratio]{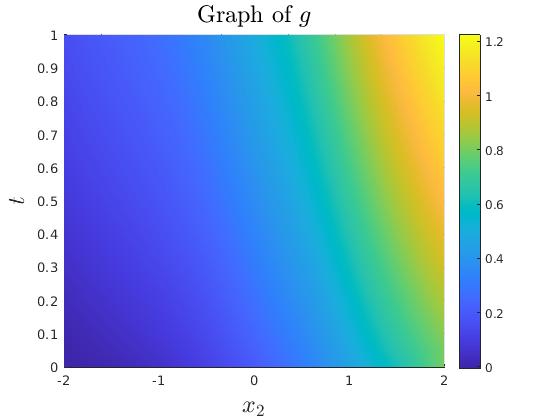}
   \includegraphics[height=6cm,keepaspectratio]{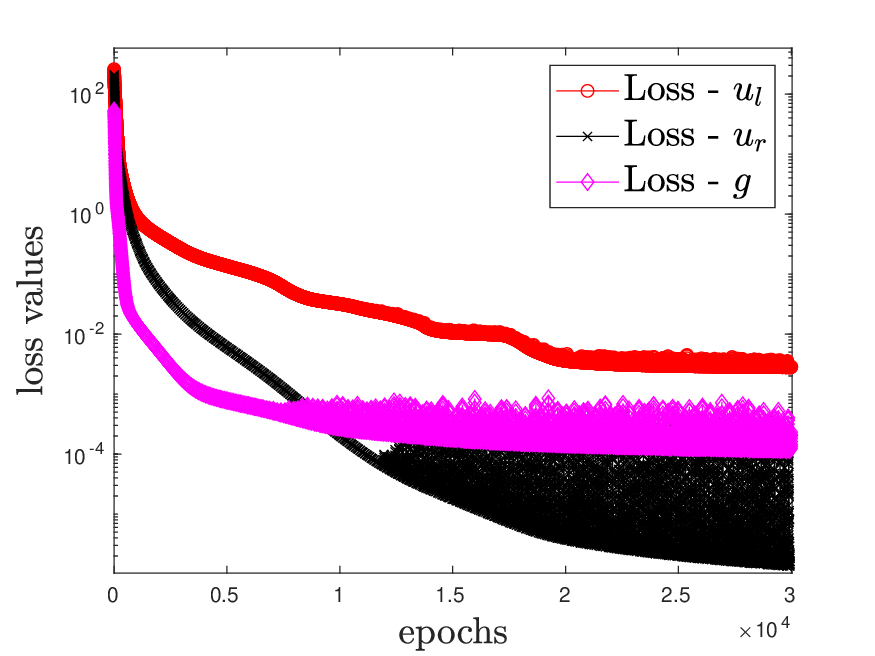} 
\end{center}
\caption{{\bf Experiment 1.} (Left) Graph of ${\bf g}$. (Right) Loss functions for training 
${\bf u}_l$, ${\bf u}_r$ and ${\bf g}$.}
\label{fig1c}
\end{figure}
This first test illustrates the zero-diffusion approximation of the two-dimensional shock wave.  \color{black} Fig. \ref{fig2a} highlights several computational advantages of the LeafNet approach for the approximation of shock waves. The LeafNet algorithm provides a sharp representation of the shock discontinuity, whereas standard computational methods would exhibits a noticeable smoothing of the shock. The loss curves reported in Fig. \ref{fig2b} indicate favorable convergence behavior. Another important computational advantage is the embarrassingly parallel nature of the method, since the computations associated with the different local space-time solutions can be carried out independently, making the approach naturally suited to parallel architectures. Overall, this result indicates that exploiting the intrinsic mathematical structure of the solution facilitates the accurate approximation of shock waves, preserves sharp discontinuities without introducing artificial numerical diffusion, and provides a highly parallelizable computational framework.
\color{black}

\medskip

\noindent{\bf Experiment 2 (1-D rarefaction wave)} 
This experiment is devoted to the approximation of one-dimensional rarefaction waves for the following Riemann problem on $Q=\Omega\times [0,T]$, where $\Omega=(-3/2,3/2)\times(-2,2)$ and $T=1.0$.
\begin{eqnarray*}
  u_0(x_1,x_2)=\left\{
  \begin{array}{ll}
    u_l, & x_1< 0,\\
    u_r, & x_1> 0 \, , 
  \end{array}
  \right.
\end{eqnarray*}
with $u_l=0$ and $u_r=1$, and the flux function $F=(F_1,F_2)$ is defined by $F_1(u)=u^2/2$, $F_2(u)=u^2/8$. We implement the method proposed in Subsection \ref{ss:1xN,rw} with $1$ layer and $10$ neurons in a one-dimensional setting. The networks are trained with $300$ randomly chosen collocation points for each variable.

Following the strategy described in Subsection \ref{subsubsec:rw2}, we simply get (as $u^l=0$, $u^r=1$ and $g_0(x_2)=0$) $g^l(x_2,t) = 0$, $g^r(x_2,t)=t$ and $s_1=x_1/t$. Hence we recover the standard one-dimensional (as the solution is here independent of $x_2$ and $t$) rarefaction~wave~structure~$w(s_1,x_2)$. 

We then search for neural network ${\bf w}$ solution approximate $w$ as proposed in Subsubsection \ref{sssec:alg,rw}. We report in Fig. \ref{fig2a} the solution ${\bf u}$ at $t=1/4$ (Top-Left) and at $t=T=1$ (Top-Right).  Similarly, we report the corresponding  solutions obtained with a direct PINN method and at $t=1/4$ (Bottom-Left) and $t=1$ (Bottom-Right). In order to get a fair comparison, we use the same number of layers and neurons in both methods. This example illustrates that the use of self-similar neural networks allows for an accurate approximation of rarefaction waves with artificial regularization of non-smooth regions.
\begin{figure}[hbt!]
\begin{center}
  \includegraphics[height=5.5cm,keepaspectratio]{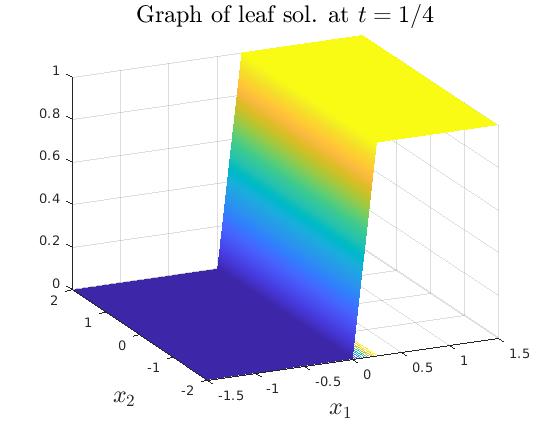}
  \includegraphics[height=5.5cm,keepaspectratio]{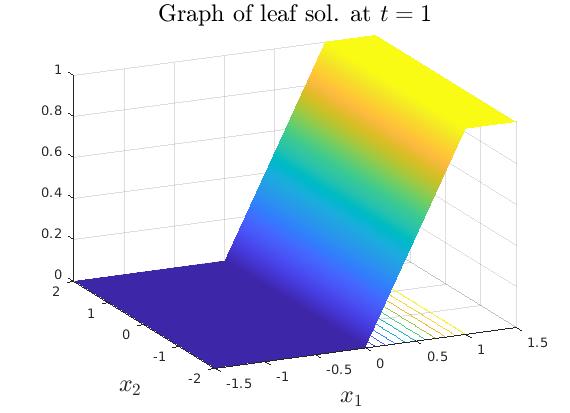}\\
    \includegraphics[height=5.5cm,keepaspectratio]{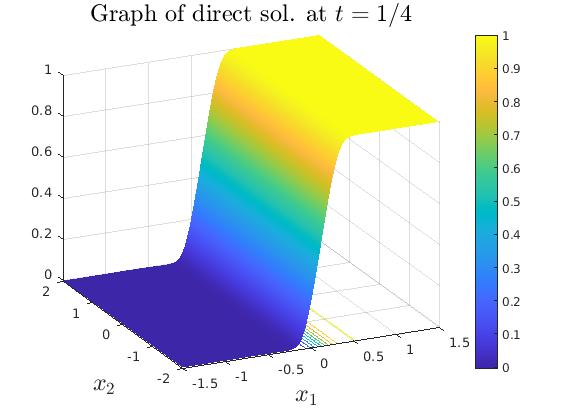}
  \includegraphics[height=5.5cm,keepaspectratio]{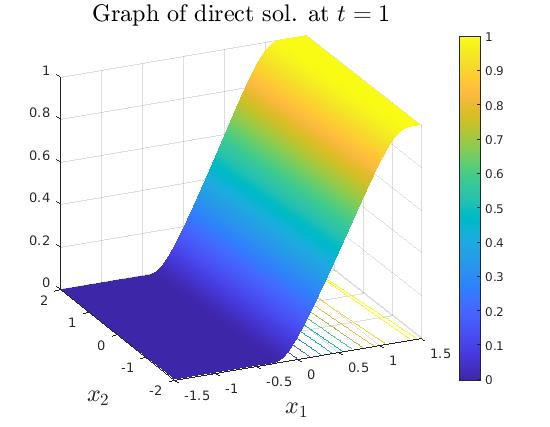}
\end{center}
\caption{{\bf Experiment 2.} 
(Top): Solution ${\bf u}$ with the method of Subsection \ref{ss:1xN,rw},
(Left) at $t=1/4$, (Right) at $t=1$. 
(Bottom): Solution with PINN-method,
(Left) at $t=1/4$, (Right) at $t=1$.}
\label{fig2a}
\end{figure}
We also report in Fig. \ref{fig2b} (Left), the loss functions (constructed using the $H^1$-norm on $\Lambda_0=(-2,2)\times (0,1)$ with self-similar neural networks) as well as the loss function with the direct PINN-method, to illustrate the convergence of the optimization algorithm.
\begin{figure}[hbt!]
\begin{center}
  \includegraphics[height=6cm,keepaspectratio]{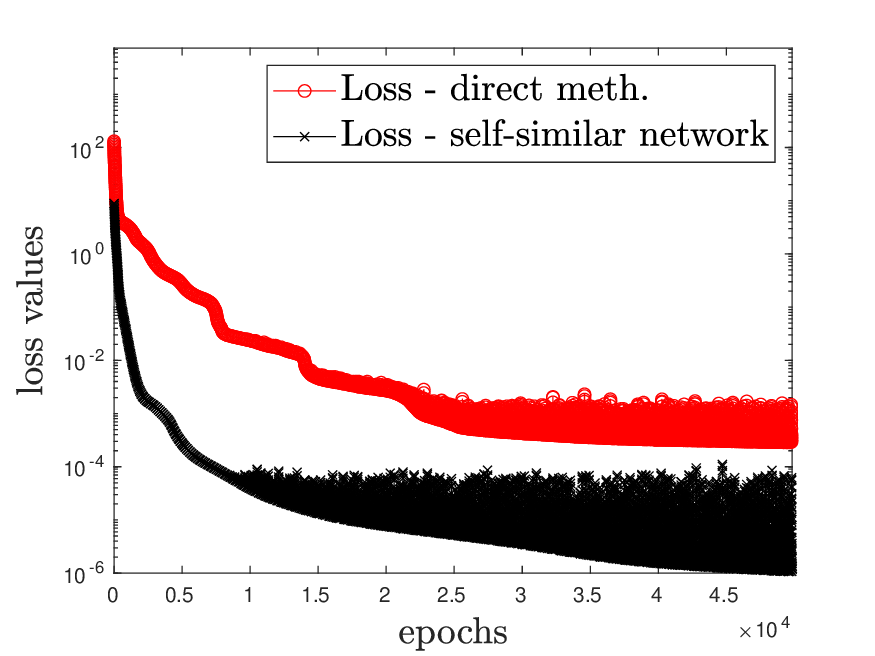}
   \includegraphics[height=6cm,keepaspectratio]{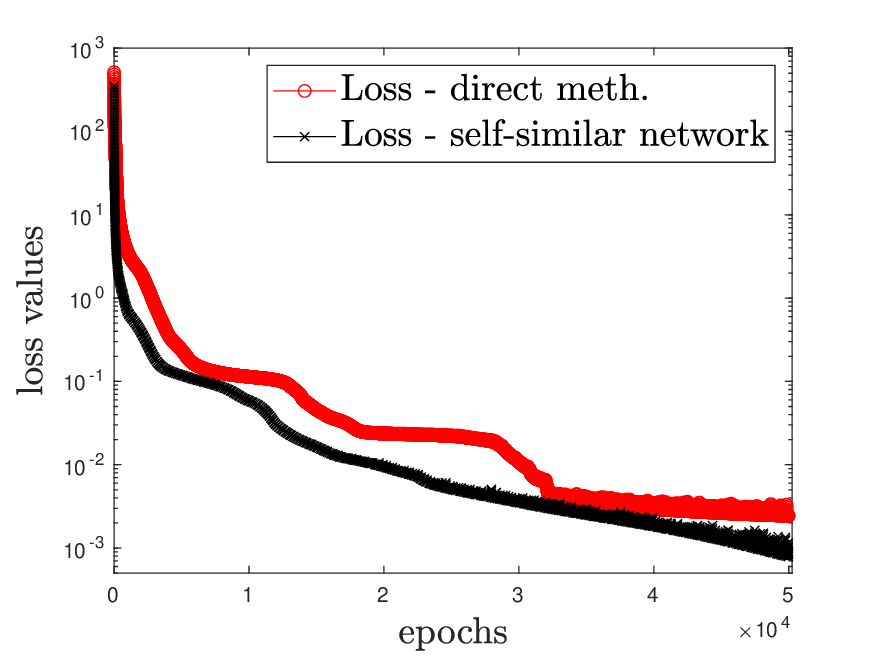} 
\end{center}
\caption{{\bf Experiment 2.} (Left) Loss functions with direct PINN-method and with self-similar networks. {\bf Experiment 2 bis.} (Right) Loss functions with direct PINN-method and with self-similar networks.}
\label{fig2b}
\end{figure}

\medskip

\noindent{\bf Experiment 2 bis. (1-D rarefaction wave)} 
In this experiment we consider the same data as in Experiment 2, excepted that we change the flux function $F=(F_1,F_2)$, by $F_1(u)=u^4/4$, $F_2(u)=u^2/8$. We report in Fig. \ref{fig2c} the solution $u_{\textrm{rw}}$ at $t=1/4$ (Top-Left) and at $t=T=1$ (Top-Right). Similarly, we report the corresponding  solutions obtained with a direct PINN method and at $t=1/4$ (Bottom-Left) and $t=1$ (Bottom-Right). 
We use the same number of layers and neurons in both methods.

\begin{figure}[hbt!]
\begin{center}
  \includegraphics[height=5.5cm,keepaspectratio]{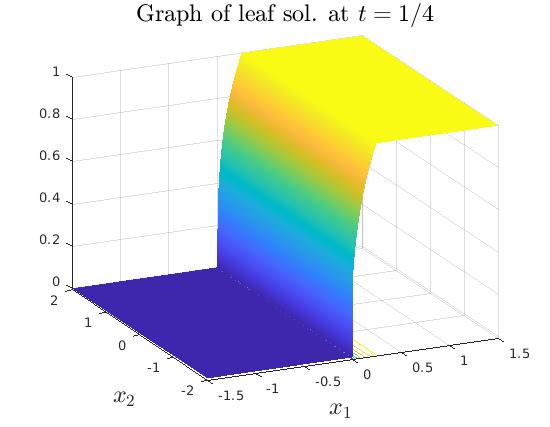}
  \includegraphics[height=5.5cm,keepaspectratio]{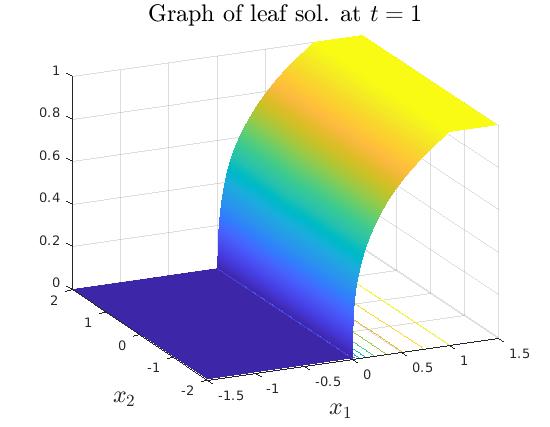}\\
    \includegraphics[height=5.5cm,keepaspectratio]{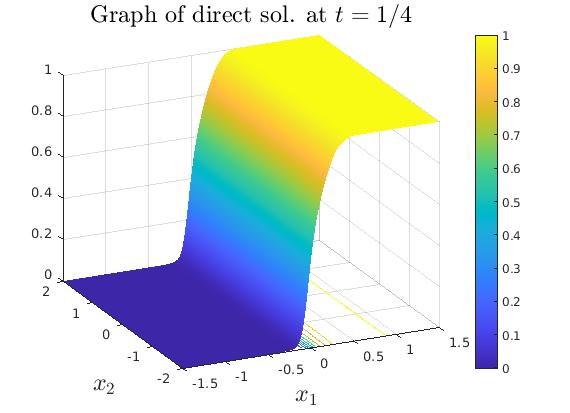}
  \includegraphics[height=5.5cm,keepaspectratio]{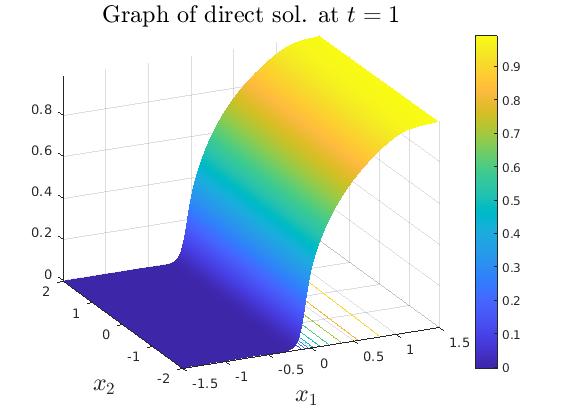}
\end{center}
\caption{{\bf Experiment 2 bis.} (Top-Left) Solution with self-similar networks at $t=1/4$. (Top-Right) Solution with self-similar networks at $t=1$. (Bottom-Left) Solution with PINN-method at $t=1/4$. (Bottom-Right) Solution with PINN-method at $t=1$.}
\label{fig2c}
\end{figure}
For completeness, we also report in Fig. \ref{fig2c} (Right), the loss functions (constructed using the $H^1$-norm on $\Lambda_0=(-2,2)\times (0,1)$ with self-similar neural networks) as well as the loss function with the direct PINN-method, to illustrate the convergence of the optimization algorithm. 

These numerical experiments illustrate the accuracy of the proposed strategy to approximate rarefaction and shock waves.

\medskip

\noindent{\bf Experiment 3 (2-D rarefaction wave)} 
This experiment is dedicated to the approximation of 2-D rarefaction wave using the strategy described in Section \ref{ss:1xN,rw}, with a neural network having $3$ hidden layers and $15$ neurons each. The networks are trained with $300$ randomly chosen collocation points in $x_1$, $x_2$ and $t$.  We consider the following Riemann problem on $Q=\Omega\times [0,T]$, where $\Omega=(-2,2)\times(-2,2)$ and $T=1.1$. The initial data is reported in Fig. \ref{fig3new} (Left)
\begin{eqnarray*}
  u_0(x_1,x_2)=\left\{
  \begin{array}{ll}
    u_0^l, & x_1< g_0(x_2),\\
    u_0^r, & x_1> g_0(x_2) \, , 
  \end{array}
  \right.
\end{eqnarray*}
with $u_0^l=0$ and $u_0^r=1$, $g_0(x_1,x_2)=(x_2+2)^2/10 -1$, and the flux function $F=(F_1,F_2)$ is defined by $F_1(u)=u^2/2$, $F_2(u)=u^2/8$.  We report in Fig. \ref{fig3new} the solution $u_{\textrm{rw}}$ at $t=T=1.1$ (Right). This example illustrates that the use of self-similar neural networks allows for an accurate approximation of rarefaction waves with no artificial regularization of non-smooth regions.
\begin{figure}[hbt!]
  \begin{center}
    \includegraphics[height=5.5cm,keepaspectratio]{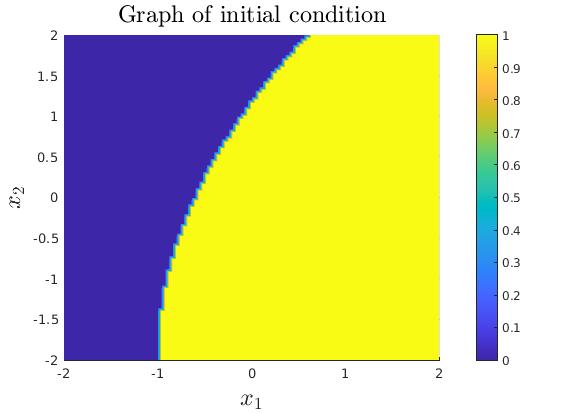} 
  \includegraphics[height=5.5cm,keepaspectratio]{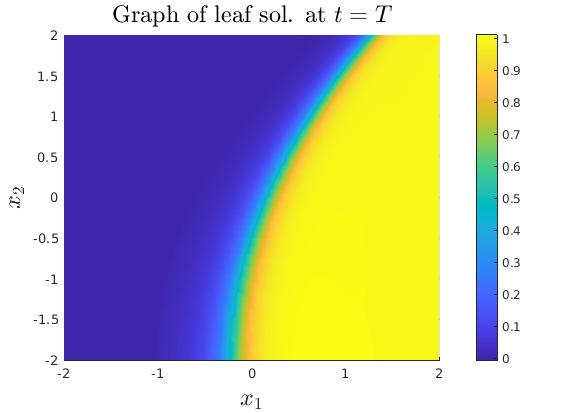}
\end{center}
\caption{{\bf Experiment 3.} (Left) Initial data. (Right) Solution with self-similar networks at $t=T$.}
\label{fig3new}
\end{figure}

\medskip

\noindent{\bf Experiment 4 (2-D shock formation)} 
This experiment is an illustration of a shock generation from a smooth initial condition. 
We compare the solution obtained with a direct PINN-method with the algorithm proposed in Subsection \ref{ss:1xN,shock,form}. We consider $Q=\Omega\times [0,T]$, with $\Omega=(-1,1)\times(-2,2)$ and $T=0.8$. The flux function $F=(F_1,F_2)$ with $F_1(u)=u^2$, $F_2(u)=u^2/4$ and the initial condition is taken as
\begin{eqnarray*}
u_0(x_1,x_2) = \cfrac{1}{2}(1-\tanh(5(x_1 + (x_2+2)^2/20))) \, .
  \end{eqnarray*}
In this example, the solution is smooth up to time $t=t^*$ corresponding to the generation of a shock wave:
\begin{eqnarray}\label{Tstar2}
  \left.
  \begin{array}{lcl}
    t^{*}  & = & -\big(\textrm{min}_{(x_1^0,x_2^0) \in \Omega} \sum_{i=1}^2 \partial_{x^0_i}(f_i(u_0(x_1,x_2)))\big)^{-1} \, ,
\end{array}
\right.
\end{eqnarray}
which is equal to $0.2$ in the considered example. The algorithm from Subsection \ref{ss:1xN,shock,form} is applied with $2$ hidden layers with $15$ neurons each.  In Fig. \ref{figRec1} (Left) we plot the initial data $u_0$ and the solution at $t=t^*$ (Right). In Fig. \ref{figRec1b}, we report the PINN solution and the LeafNet solution (Right) at $t=T$.
\begin{figure}[hbt!]
  \begin{center}  \includegraphics[height=6cm,keepaspectratio]{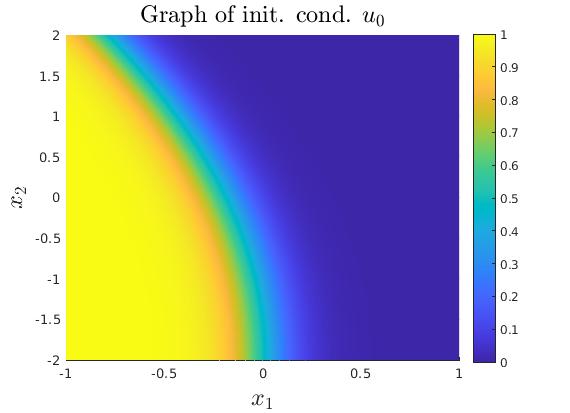}
           \includegraphics[height=6cm,keepaspectratio]{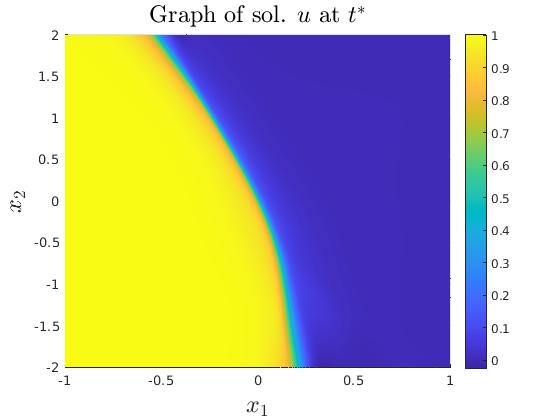} 
\end{center}
\caption{{\bf Experiment 4.} (Left) Graph of  $u_0$. (Right) Graph of the solution at $t=t^*$.}
\label{figRec1}
\end{figure}
\begin{figure}[hbt!]
  \begin{center}
        \includegraphics[height=6cm,keepaspectratio]{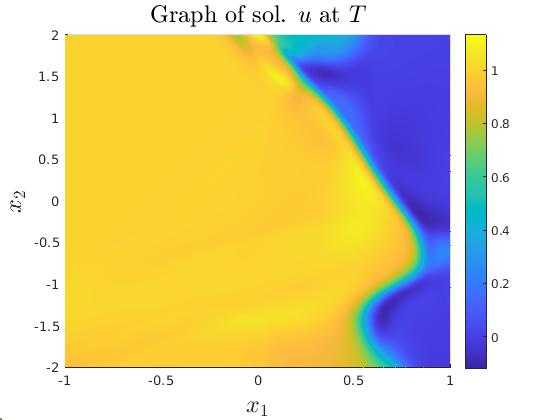}
  \includegraphics[height=6cm,keepaspectratio]{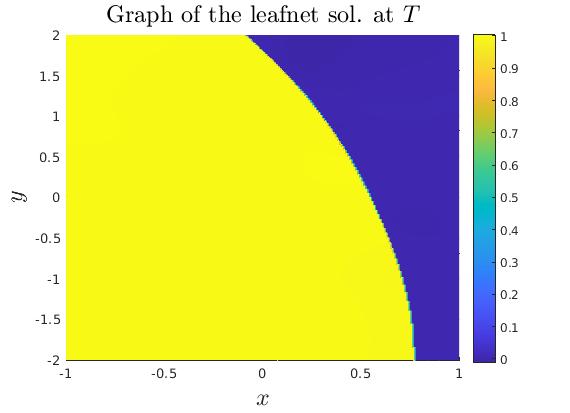}
\end{center}
\caption{{\bf Experiment 4.} (Left) Graph of the PINN solution at $t=T$. (Right) Graph of the LeafNet solution at $t=T$.}
\label{figRec1b}
\end{figure}

 We report in Fig. \ref{figRec2} (Left) the surface of discontinuity, and the loss function for the LeafNet algorithm  in Fig. \ref{figRec2} (Right). 
\begin{figure}[hbt!]
\begin{center}
    \includegraphics[height=6cm,keepaspectratio]{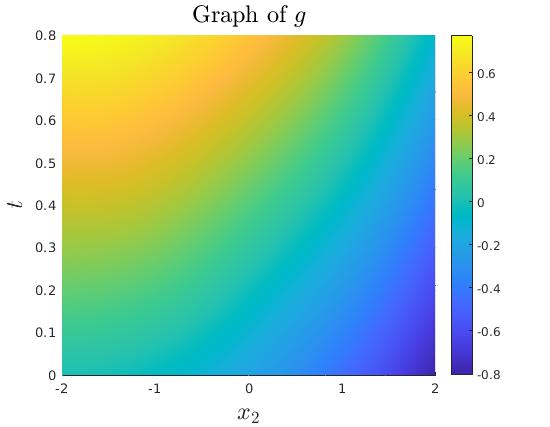}
      \includegraphics[height=6cm,keepaspectratio]{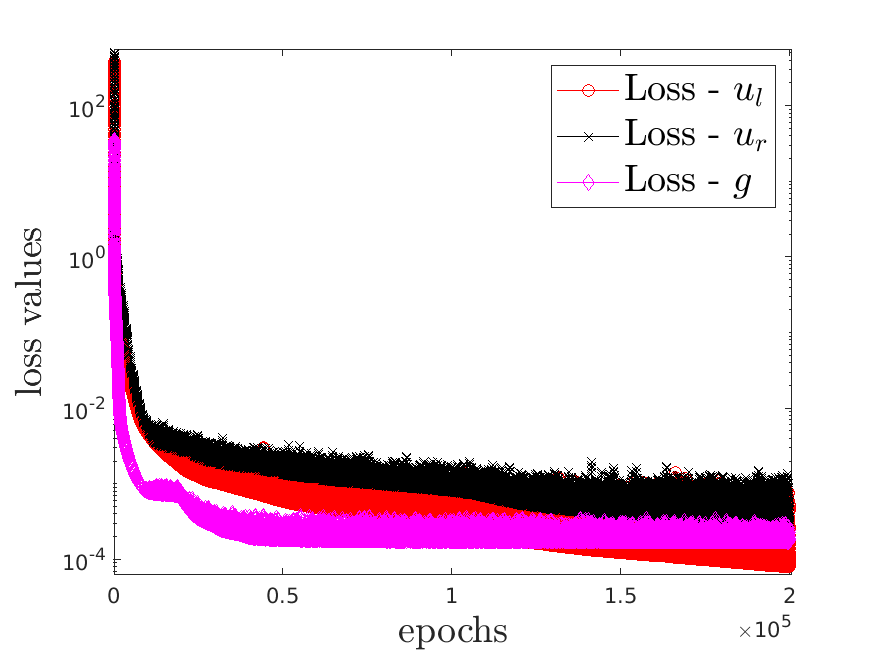}
\end{center}
\caption{{\bf Experiment 4.} (Left)  Graph of the surface of discontinuity. (Right) Loss functions.}
\label{figRec2}
\end{figure}
This test illustrates the ability of the algorithm to accurately generate and propagate shock waves.

\subsection{Systems of HCL}
In this subsection, we propose a preliminary illustration of the above strategy applied to a single shock-wave solution to a two-dimensional system.  We consider here the two-dimensional shallow-water equations with flat bottom modeling compressible fluid flows, and corresponding to $m=3$ in \eqref{e:CL(N,m)}. 
The latter models the evolution of a fluid velocity field $(\mathcal{u},\mathcal{v})$ and height $\mathcal{h}$ in shallow water.  At this stage, the proposed method does not consider initial wave decompositions for instance from an arbitrary discontinuous initial data (or more complex initial data), and is mainly designed for simple waves (rarefaction and$/$ shock waves. For this reasons we here focus on a relatively simple case (one shock wave). The conservative variable reads 
$u=(u_1,u_2,u_3)=(\mathcal{h},\mathcal{h}\mathcal{u},\mathcal{h}\mathcal{v})$, and the flux functions are defined as $F_k=(F_k^i)$, $k=1,2$, $i=1,2,3$, where
\[
\begin{array}{rllllllll}
F_k^*(u)
&= 
(\mc{hu}, &\mc{hu^2 + gh^2/2}, &\mc{huv}) 
&=(u_2,&u_2^2/u_1 + gu_1^2/2,& u_2u_3/u_1), \\
F_2^*(u)
&=(\mc{hv},&\mc{huv},&\mc{hv^2+gh^2/2})
&=(u_3,&u_2u_3/u_1,          &u_3^2/u_1 + gu_1^2/2).
\end{array}
\]
The system is strictly hyperbolic for $\mathcal{h}>0$, and the eigenvalues of 
$\nabla F_1^*$ (written in non conservative variables) are 
\[
\lambda^1_1 = \mathcal{u}-\sqrt{\mathcal{gh}},\;\;
\lambda^2_1 = \mathcal{u}+\sqrt{\mathcal{gh}},\;\;
\lambda^3_1 = \mathcal{u},
\]
with corresponding eigenvectors 
\[
v^1_1=(1,\mathcal{u}-\sqrt{\mathcal{gh}},\mathcal{v}),\;\;
v^2_1=(0,0,1),\;\;
v^3_1=(1,\mathcal{u}+\sqrt{\mathcal{gh}},\mathcal{v}).
\] 
The eigenvalues of $\nabla F^*_2$ read  (written in non conservative variables) 
\[
\lambda^1_2 = \mathcal{v}-\sqrt{\mathcal{gh}},\;\;
\lambda^2_2 = \mathcal{v},\;\;
\lambda^3_2 = \mathcal{v}+\sqrt{\mathcal{gh}},
\]
and corresponding eigenvectors are given by 
\[
v^1_2=(1,\mathcal{u},\mathcal{v}-\sqrt{\mathcal{gh}}),\;\;
v^2_2=(0,1,0),\;\;
v^3_2=(1,\mathcal{u},\mathcal{v}+\sqrt{\mathcal{gh}}).
\]
Setting $\omega=(\omega_1,\omega_2) \in \R^2$ such that $|\omega|=1$,  the real eigenvalues of $\omega_1\nabla F^*_1 +\omega_2\nabla F^*_2$ are given by 
$\ell_1(\omega)=\omega_1 \mathcal{u} + \omega_2 \mathcal{v} -\sqrt{g\mathcal{h}}$, $\ell_2(u,\omega)=\omega_1 \mathcal{u} + \omega_2 \mathcal{v}$, $\ell_3(\omega)=\omega_1 \mathcal{u} + \omega_2 \mathcal{v} + \sqrt{g\mathcal{h}}$. 
It will be convenient to solve the IBVP associated to $(u_1,u_2,u_3)$ in non conservative form, which in terms of $\mc{h, u, v}$ reads as
\begin{eqnarray}\label{sw}
        \left.
      \begin{array}{rcl}
        \partial_t \mathcal{h} + \mathcal{h}(\partial_{x_1}\mathcal{u} + \partial_{x_2}\mathcal{v}) + \mathcal{u}\partial_{x_1} \mathcal{h} + \mathcal{v}\partial_{x_2}\mathcal{h} & = & 0 \, ,\\
        \partial_t \mathcal{u}  + \mathcal{u}\partial_{x_1}\mathcal{u} + \mathcal{v}\partial_{x_2}\mathcal{u}+ g\partial_{x_1} \mathcal{h}  & = & 0 \, ,    \\
         \partial_t \mathcal{v}  + \mathcal{u}\partial_{x_1}\mathcal{v} + \mathcal{v}\partial_{x_2}\mathcal{v}+ g\partial_{x_2} \mathcal{h}  & = & 0 \, .              
         \end{array}
      \right.       
\end{eqnarray}
We assume here that the initial discontinuity is a straight line $\Gamma_0$ with equation
${g}_0(x_2)=\beta x_2$, where $\beta=1/2$. 
We design a relevant non-trivial initial condition, we ensure that the left and right states at the discontinuity satisfy the Rankine-Hugoniot jump condition. In this goal, we first consider two {\it constant} conservative states 
$\overline{s}^l_0=(\overline{\mathcal{h}}_0^l,\overline{\mathcal{h}}_0^l\overline{\mathcal{u}}^l_0,\overline{\mathcal{h}}_0^l\overline{\mathcal{v}}^l_0)$ and 
$\overline{s}_0^r=(\overline{\mathcal{h}}_0^r,\overline{\mathcal{h}}_0^r\overline{\mathcal{u}}^l_0,\overline{\mathcal{h}}_0^r\overline{\mathcal{v}}^r_0)$,
separated by the oblique shock $\Gamma_0$ with unit normal vector $\omega=(\omega_1,\omega_2)$,
such that for some $\sigma^0$, we have
\begin{eqnarray}\label{tmpRH}
  \sigma^0\jump{\overline{s}^0}
  =
  \omega_1 \jump{F_1(\overline{s}^0)} + 
  \omega_2 \jump{F_2(\overline{s}^0)}.
\end{eqnarray}
We assume here that the initial discontinuity is a straight line with ${g}_0(x_2)=\beta x_2$, where $\beta=-\omega_1/\omega_2$, so that from \eqref{tmpRH} we get
\begin{eqnarray*}
  \cfrac{\sigma^0}{\omega_1}\jump{\overline{s}^0}
  =  
  \jump{F_1(\overline{s}^0)}
  -\beta \jump{F_2(\overline{s}^0)}.
\end{eqnarray*}
Explicitly,  we take $\omega_1=1/\sqrt{5}$, $\omega_2=-2/\sqrt{5}$ corresponding to $\beta=1/2$. 
After simple preliminary calculations, we select: 
$\overline{\mathcal{h}}_0^l=2$, $\overline{\mathcal{h}}_0^r=1$, $\overline{\mathcal{u}}_0^l=\sqrt{2g/5}$, $\overline{\mathcal{u}}_0^r=0$, $\overline{\mathcal{v}}_0^l=-\sqrt{g/10}$, $\overline{\mathcal{v}}_0^r=0$ and $\sigma^0=\sqrt{5}\overline{\mathcal{u}}_0^l/2=\sqrt{g/2}$. The corresponding eigenvalues of 
$\omega_1 \nabla F_1(\overline{s}_{\nu}^0) + 
\omega_2 \nabla F_2(\overline{s}_{\nu}^0)$ are given by
\begin{eqnarray*}
  \left.
  \begin{array}{ll}
    \ell_{1}(\overline{s}^{l}_0)=\omega_1 \big(\sqrt{2g/5}-1\big) - \omega_2 \big(\sqrt{g/10}+1\big), & \ell_{1}(\overline{s}^{r}_0)= -\omega_1 - \omega_2,\\
     \ell_{2}(\overline{s}^{l}_0)=\omega_1\sqrt{2g/5} -\omega_2\sqrt{g/10}, &\ell_{2}(\overline{s}^{r}_0)= 0,\\   
     \ell_{3}(\overline{s}^{l}_0)=\omega_1 \big(\sqrt{2g/5}+1\big) + \omega_2 \big(1-\sqrt{g/10}\big), & \ell_{3}(\overline{s}^{r}_0)= \omega_1 + \omega_2 \, .  
  \end{array}
  \right.
  \end{eqnarray*}
\noindent{\it Characterization of the shock.} Selecting $g=1$, and $\omega=(2,-1)/\sqrt{5}$, we then have $\sigma^0=1/\sqrt{2} \approx 0.7071$ and $\ell_1(\overline{s}^{l}_0)\approx 1.013$, $\ell_1(\overline{s}^{r}_0) \approx 0.447$, that is  $\ell_1(\overline{s}^{r}_0) <\sigma^0 < \ell_1(\overline{s}^{l}_0)$ corresponding to a 1-shock.
\\

Based on those preliminary computations we construct the initial data in non conservative form 
$(\mathcal{h}^r_0,\mathcal{u}_0^r,\mathcal{v}^r_0)=(1,0,0)$ and  
$(\mathcal{h}^l_0,\mathcal{u}_0^l,\mathcal{v}^l_0)$ in $\Omega$ defined by
\begin{eqnarray*}
\mathcal{h}_0^l(x_1,x_2) 
&=&
(\overline{\mathcal{h}}^l_0 -\zeta k_1(\min\{x_1,{g}_0(x_2)\},x_2))
k_2(\min\{x_1,{g}_0(x_2)\},x_2),
\\
\mathcal{u}_0^l(x_1,x_2) 
&=&
(\overline{\mathcal{u}}^l_0 -\zeta k_1(\min\{x_1,{g}_0(x_2)\},x_2))k_2
(\min\{x_1,{g}_0(x_2)\},x_2),
\\
\mathcal{v}_0^l(x_1,x_2) 
&=&
(\overline{\mathcal{v}}^l_0 k_1(\min\{x_1,{g}_0(x_2)\},x_2))k_2(\min\{x_1,{g}_0(x_2)\},x_2),
\end{eqnarray*}
and where
\begin{eqnarray*}
 k_1(x_1,x_2)={g}_0(x_2)-(x_1-(a_1+b_1)/2), \, \, \,    k_2(x_1,x_2)= \gamma \tanh(\alpha k_1(x_1,x_2)) \, .
\end{eqnarray*}
\vspace*{1mm}

\medskip

\noindent{\bf Experiment 5 (2-D system, shock)} 
Here we take $a_1=-1$, $b_1=2$, $a_2=-b_2=-1$, $\alpha=20$, $\zeta=1/4$, $\gamma=1/2$. The final $T$ is fixed to $0.5$. In the numerical experiment, the networks for approximating the IBVP on  $(u_1,u_2,u_3)$ and ${g}$ have 2 hidden layers with $10$ neurons. The number of randomly chosen collocation points is $150$ in $x_1$, $x_2$ and $t$.
We report  in Fig. \ref{fig3a} the graphs of $\mathcal{h}^0$, $\mathcal{u}^0$ and $\mathcal{v}^0$. We then report in Fig. \ref{fig3b}, the graphs of the reconstructed solution at final time $T$ (non conservative variables) $\mathcal{h}(\cdot,T)$, $\mathcal{u}(\cdot,T)$ and  $\mathcal{v}(\cdot,T)$. We report in Fig. \ref{fig3c} (Left) the graph of ${g}$ in $Q_2$ defining the time-space surface of discontinuity. For completeness, we also report the graph of convergence of the minimization algorithms in Fig. \ref{fig3c} (Right).
\begin{figure}[hbt!]
\begin{center}
  \includegraphics[height=3.95cm,keepaspectratio]{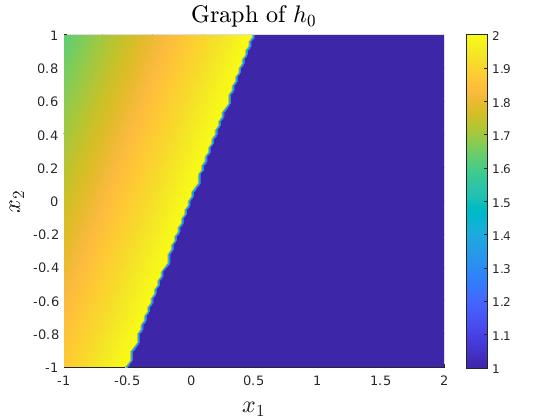}
   \includegraphics[height=3.95cm,keepaspectratio]{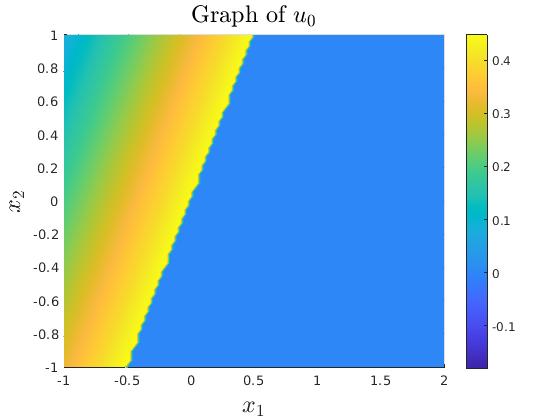}
  \includegraphics[height=3.95cm,keepaspectratio]{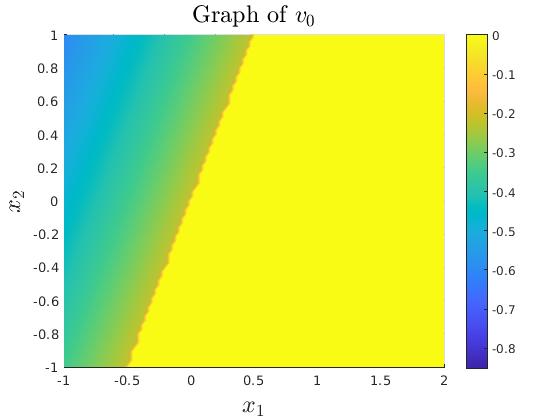}
\end{center}
\caption{{\bf Experiment 5.}  (Left) Graph of  $\mathcal{h}_l$. (Middle) Graph of $\mathcal{u}^0$. (Right) Graph of $\mathcal{v}^0$.}
\label{fig3a}
\end{figure}

\begin{figure}[hbt!]
\begin{center}
  \includegraphics[height=3.95cm,keepaspectratio]{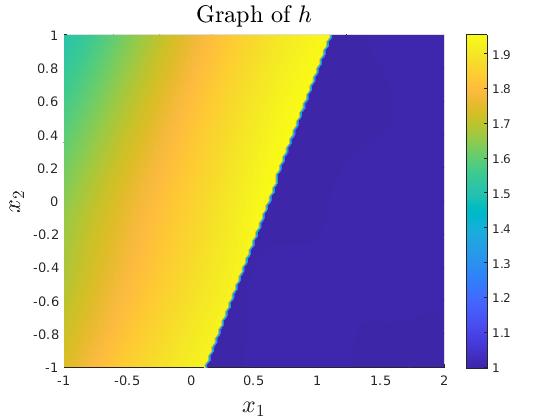}
  \includegraphics[height=3.95cm,keepaspectratio]{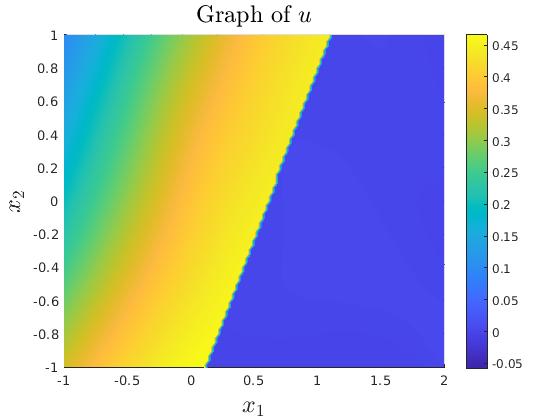}
  \includegraphics[height=3.95cm,keepaspectratio]{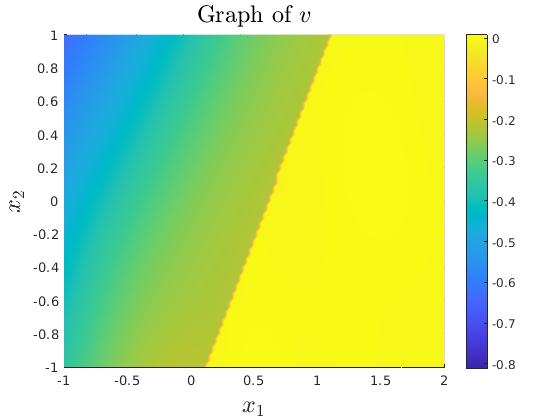}
\end{center}
\caption{{\bf Experiment 5.} At final time $T$ (Left) Graph of  
${\boldsymbol{\mathcal h}}(\cdot,\cdot,T)$. (Middle) Graph of $\boldsymbol{\mathcal{u}}(\cdot,\cdot,T)$.  (Right) Graph of $\boldsymbol{\mathcal{v}}(\cdot,\cdot,T)$.}  
\label{fig3b}
\end{figure}

\begin{figure}[hbt!]
\begin{center}
  \includegraphics[height=6cm,keepaspectratio]{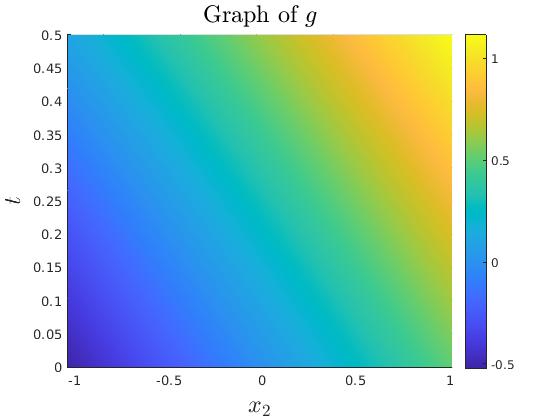}
   \includegraphics[height=6cm,keepaspectratio]{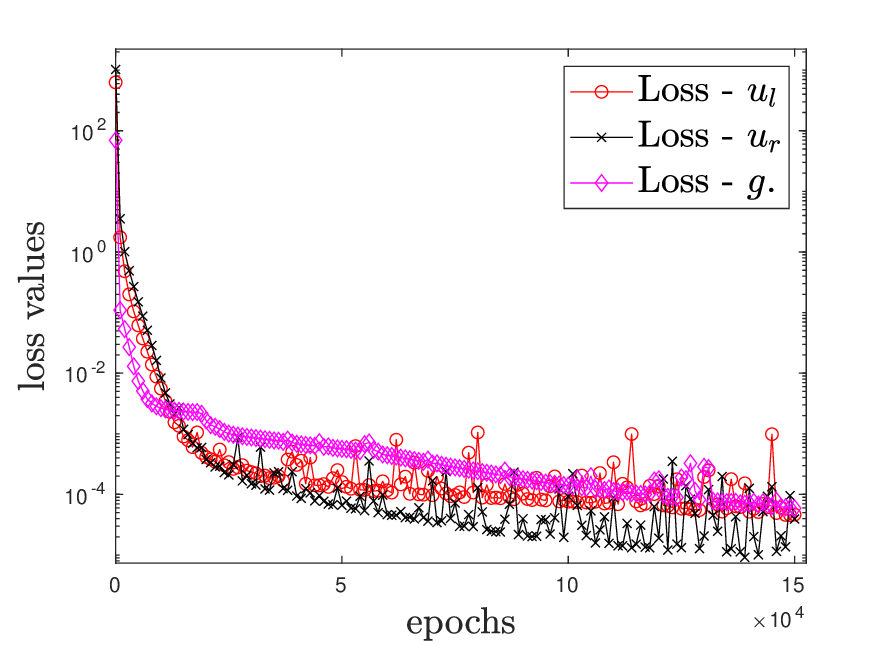} 
\end{center}
\caption{{\bf Experiment 5.} (Left) Graph of ${\bf g}$. (Right) Loss functions for training 
${\bf u}^l$, ${\bf u}^r$ and ${\bf g}$.}
\label{fig3c}
\end{figure}
We yet observe that the propagation of the shock wave is performed without any artificial diffusion,
which is expected from the design of the algorithm.

\medskip

\noindent{\bf Experiment 5 bis (2-D system, shock)} 
In this following experiment, we consider the exact same setting as above except that we now consider a non-straight initial discontinuity:  ${g}_0(x_2)=\beta(x_2-a_2)$, where $\beta=10^{-1}$. Here, we report the initial and final LeafNet solution in Fig. \ref{fig4b}.
\begin{figure}[hbt!]
  \begin{center}
  \includegraphics[height=3.95cm,keepaspectratio]{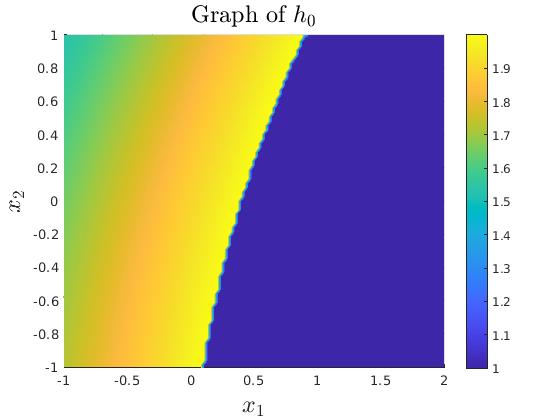}
   \includegraphics[height=3.95cm,keepaspectratio]{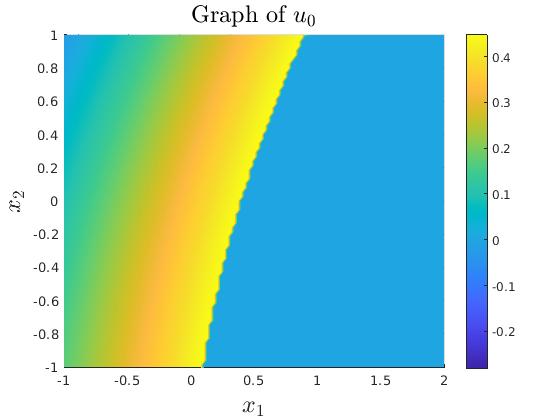}
  \includegraphics[height=3.95cm,keepaspectratio]{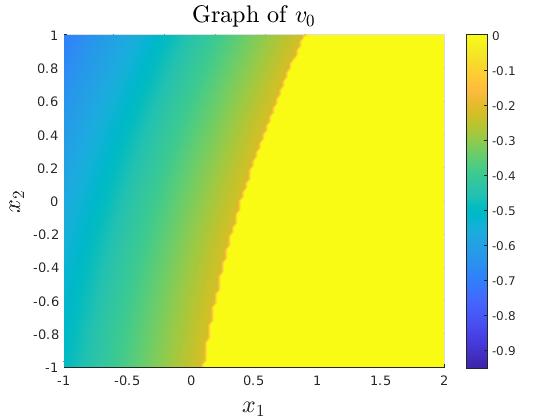}\\    
  \includegraphics[height=3.95cm,keepaspectratio]{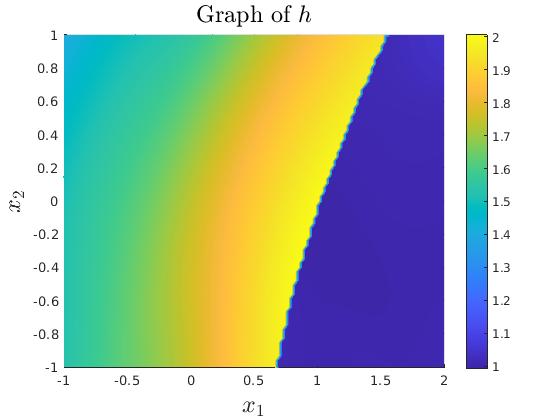}
  \includegraphics[height=3.95cm,keepaspectratio]{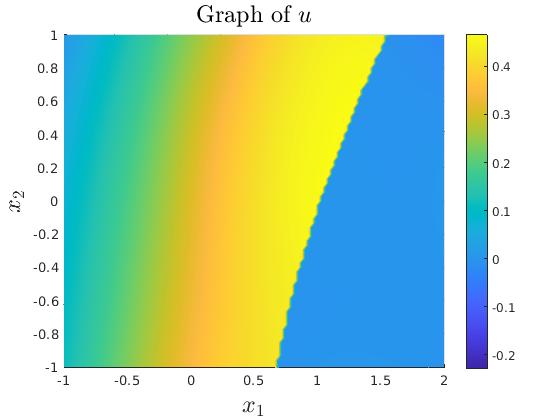}
  \includegraphics[height=3.95cm,keepaspectratio]{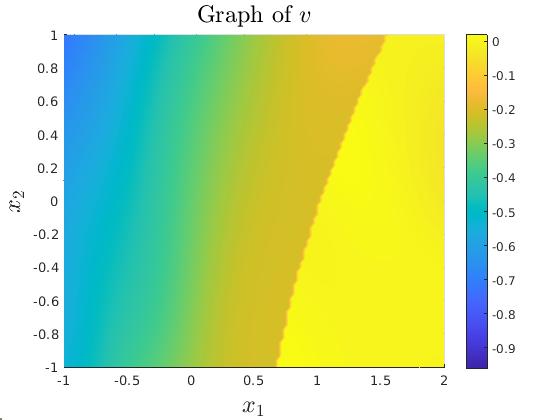}
\end{center}
\caption{{\bf Experiment 5 bis.} Initial data (Top) LeafNet solution at final time $T$ (Bottom)At final time $T$.}
\label{fig4b}
\end{figure}
We also report the corresponding loss function and graph of ${\bf g}$ in Fig. \ref{fig4c}
\begin{figure}[hbt!]
\begin{center}
  \includegraphics[height=6cm,keepaspectratio]{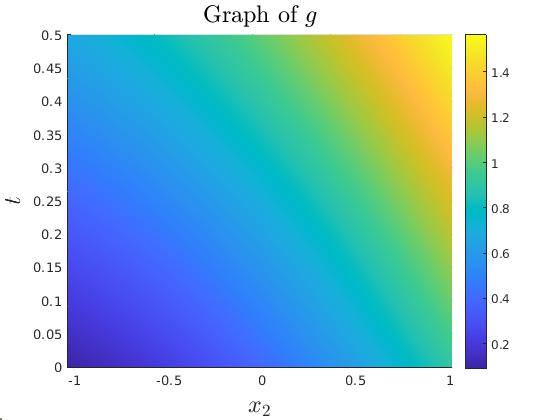}
   \includegraphics[height=6cm,keepaspectratio]{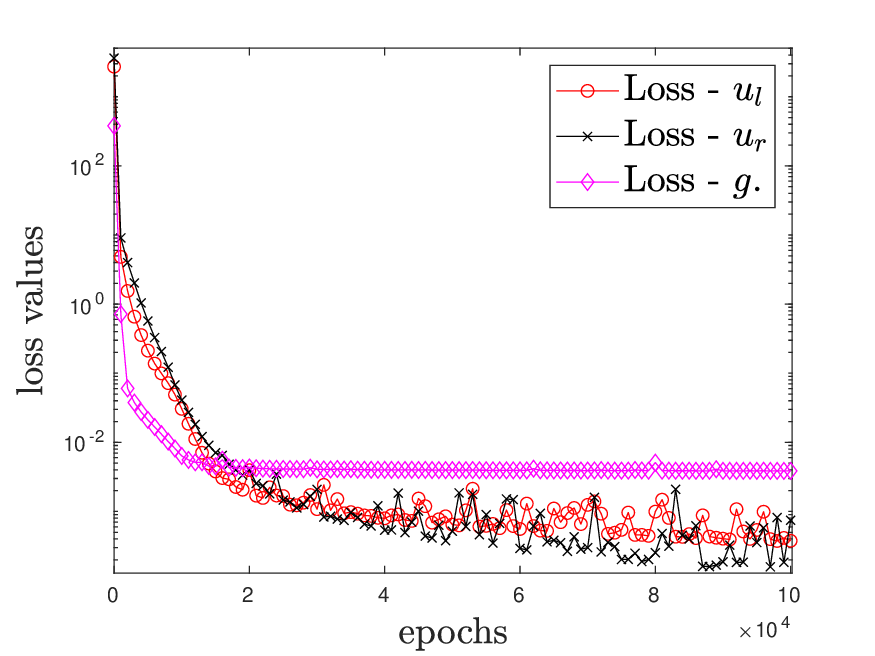} 
\end{center}
\caption{{\bf Experiment 5 bis.} (Left) Graph of ${\bf g}$. (Right) Loss functions for training 
${\bf u}^l$, ${\bf u}^r$ and ${\bf g}$.}
\label{fig4c}
\end{figure}

This test illustrates that the method can as well as non-oblique shock.

\color{black}
\subsection{About the entropy of the LeafNet method}
The LeafNet method does not explicitly impose any entropy constraint. In the case of a convex flux, the physically relevant entropy solution can be identified relatively easily, since admissible shocks satisfy the classical Lax entropy condition. For non-convex fluxes, however, the structure of the entropy solution is generally more intricate, as Riemann solutions may involve several intermediate states and composite wave patterns consisting of shocks and rarefaction waves.
Hence, a discontinuous wave between two states does not necessarily generate a single Lax shock or a single rarefaction wave. The corresponding entropy solution may instead contain compound waves, such as shock-rarefaction or rarefaction-shock configurations, with intermediate states determined by tangent or convex/concave envelope constructions. 
The LeafNet method is able to solve these shock-rarefaction or rarefaction-shock configurations with intermediate states, see Section \ref{ss:1xN,rw}, 
without any distinction between entropic or non-entropic solution because the present LeafNet formulation does not explicitly enforce any entropy selection mechanism. However, once the appropriate entropy conditions have been specified for a given non-convex flux---a question that is independent of the numerical solver---they can be readily incorporated into the LeafNet framework; see Section \ref{ss:1xN,rw}.
\color{black}

\medskip

\noindent{\bf Experiment 6.} To illustrate the inability of LeafNet to directly capture entropy solutions with non-convex cases, we propose a simple one-dimensional equation
\begin{eqnarray*}
u_t + f(u)_x = 0\, ,
\end{eqnarray*}
where the flux function is given by $f(u)=u^3-u$ and initial data:
\begin{eqnarray*}
  u_0(x) =
  \left\{
  \begin{array}{cc}
    1, & x<0, \\
    -1, & x>0 \, .
    \end{array}
    \right.
\end{eqnarray*}
The entropy solution is a compound wave consisting of a shock from $u_L=1$ to an intermediate state $u_*=-1/2$, followed by a rarefaction wave from $u_*=-1/2$ to $u_R=-1$. The intermediate state is determined by the tangency condition
\begin{eqnarray*}
f'(u_*)=\cfrac{f(u_*)-f(1)}{u_*-1} \, .
\end{eqnarray*}
The exact solution is given by 
\begin{eqnarray*}
u(x,t) =
\begin{cases}
1, & \cfrac{x}{t}<-\cfrac{1}{4}, \\
     -\sqrt{\cfrac{1+x/t}{3}}, & -\cfrac{1}{4}<\cfrac{x}{t}<2, \\
-1, & \cfrac{x}{t}>2 \, .
\end{cases}
\end{eqnarray*}
In Fig. \ref{figNC}, we report in space-time, the exact solution (Left) and the numerical solution (Right)  obtained by the direct application of the leafneat method considering only one shock wave. As expected the latter is not able to directly capture the exact solution which was designed specifically for convex fluxes.
\begin{figure}[hbt!]
\begin{center}
  \includegraphics[height=5cm,keepaspectratio]{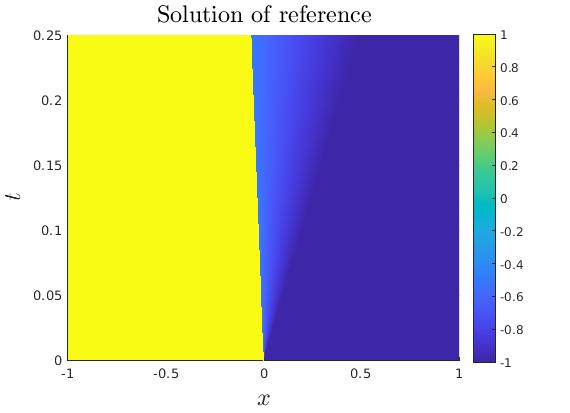}
  \includegraphics[height=5cm,keepaspectratio]{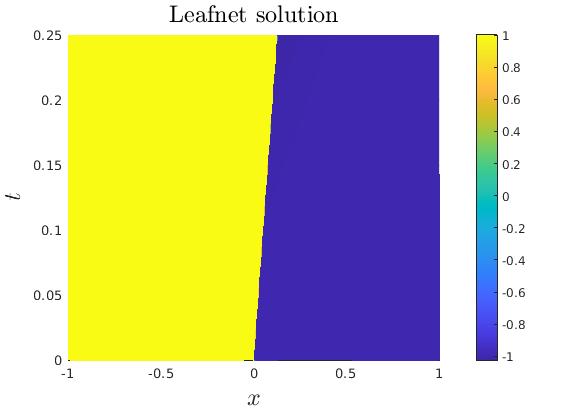}
\end{center}
\caption{{\bf Experiment 6.} Graph of the solution in space-time. (Left) Reference solution. (Right) LeafNet solution.}
\label{figNC}
\end{figure}

\color{black}

\section{Conclusion}\label{s:conclusion}
We have presented a neural network based computational method for accurately solving piece-wise smooth solutions to $N$-dimensional hyperbolic conservation laws, and more particularly for tracking shock waves, formation of shock waves and rarefaction waves. In the case of shock waves, we have shown rigorously that the associated method is well-posed and we have given an error estimated of the shock wave approximation. We have also shown that the proposed method can be extended to the formation and evolution of shock waves solution to $N$-dimensional systems of conservation laws.

The driving idea of the method is the reformulation of the HCL as a coupled system involving only smooth local solutions, hence providing a mathematical setting compatible with the use of neural network algorithms approximating PDE in their strong form. 
We then have developed a neural network-based algorithm for approximating the new equivalent coupled system. Several numerical experiments were also proposed to illustrate the proposed strategy and its accuracy to approximate shock and rarefaction waves.

\bibliographystyle{unsrt}
\bibliography{refs_leaf}

\end{document}